\documentclass[reqno]{amsart}


\usepackage [mathscr]{eucal}
\usepackage{amsmath,amsthm,amssymb,amscd}
\usepackage{amsrefs}
\usepackage{epsf}
\usepackage{amsfonts}
\usepackage{dsfont}
\usepackage{latexsym}
\usepackage{mathrsfs}
\usepackage{layout}
\usepackage{bm}
\usepackage{verbatim}

\newtheorem{theorem}{Theorem}[section]
\newtheorem{lemma}[theorem]{Lemma}
\newtheorem{proposition}[theorem]{Proposition}
\newtheorem{corollary}[theorem]{Corollary}

\newtheorem{definition}[theorem]{Definition}

\numberwithin{equation}{section}

\def\Xint#1{\mathchoice
{\XXint\displaystyle\textstyle{#1}}%
{\XXint\textstyle\scriptstyle{#1}}%
{\XXint\scriptstyle\scriptscriptstyle{#1}}%
{\XXint\scriptscriptstyle\scriptscriptstyle{#1}}%
\!\int}
\def\XXint#1#2#3{{\setbox0=\hbox{$#1{#2#3}{\int}$ }
\vcenter{\hbox{$#2#3$ }}\kern-.6\wd0}}

\def\dint{\Xint-}

\DeclareMathOperator *{\essosc}{ess\ osc}
\DeclareMathOperator *{\osc}{osc}
\DeclareMathOperator *{\esssup}{ess\ sup}
\DeclareMathOperator *{\essinf}{ess\ inf}
\DeclareMathOperator *{\di}{div} 
\DeclareMathOperator *{\meas}{meas}
\DeclareMathOperator *{\dist}{dist}
\DeclareMathOperator *{\data}{data}
\DeclareMathOperator *{\diam}{diam}
\DeclareMathOperator *{\loc}{loc}
\DeclareMathOperator *{\Lip}{Lip}
\DeclareMathOperator *{\Tr}{Tr}
\DeclareMathOperator *{\BMO}{BMO}
\DeclareMathOperator *{\Proj}{Proj}
\DeclareMathOperator *{\BBR}{\mathbb{R}}
\DeclareMathOperator *{\BBC}{\mathbb{C}}

\newcommand{\cJ}{{\mathcal J}}

\begin{document}
\title[Elliptic PDEs with complex coefficients]{
Perturbation theory for solutions to second order elliptic operators with complex coefficients and the $L^p$ Dirichlet problem}

\author{Martin Dindo\v{s}}
\address{School of Mathematics, \\
         The University of Edinburgh and Maxwell Institute of Mathematical Sciences, UK}
\email{M.Dindos@ed.ac.uk}

\author{Jill Pipher}
\address{Department of Mathematics, \\ 
	Brown University, USA}
\email{jill\_pipher@brown.edu}

\begin{abstract}
We establish a Dahlberg-type perturbation theorem for second order divergence form elliptic  operators with complex coefficients. In \cite{DPcplx}, we showed the following result: If ${\mathcal L}_0=\mbox{div} A^0(x)\nabla+B^0(x)\cdot\nabla$ is a $p$-elliptic operator
satisfying the assumptions of Theorem \ref{S3:T1} then the $L^p$ Dirichlet problem for the operator ${\mathcal L}_0$ is solvable in the upper half-space ${\mathbb R}^n_+$.

In this paper we prove that the $L^p$ solvability is stable under small perturbations of ${\mathcal L}_0$. That is if ${\mathcal L}_1$
is another divergence form elliptic  operator with complex coefficients and the coefficients of the
operators ${\mathcal L}_0$ and ${\mathcal L}_1$ are sufficiently close in the sense of Carleson
measures, then the $L^p$ Dirichlet problem for the operator
${\mathcal L}_1$ is solvable for the same value of $p$. 

As a corollary we obtain a new result on $L^p$ solvability of the Dirichlet problem for operators of the form ${\mathcal L}=\mbox{div} A(x)\nabla+B(x)\cdot\nabla$ where the matrix $A$ satisfies weaker Carleson condition
than in \cite{DPcplx}; in particular the coefficients of $A$ need no longer be differentiable and instead satisfy a Carleson condition that controls the oscillation of the matrix $A$ over Whitney boxes. 
This result in the real case has been established in \cite{DPP}.
\end{abstract}

\maketitle

\section{Introduction}\label{S:Intro}

It is natural to ask when behavior of solutions to, or other properties of, certain partial differential equations are preserved under small perturbations of the coefficients. In the case of second order elliptic and parabolic operators, this is a well-studied question, at least when the coefficients are real valued. In 1986, Dahlberg \cite{Da} studied this question in the context of the solvability of the $L^p$ Dirichlet problem, and gave a criterion for smallness of the perturbation in terms of Carleson measures. We refer to this, and a variety of similar conditions that have arisen over the years, as ``Dahlberg-type" perturbation criteria. The results of \cite{Da} were sharpened in \cite{FKP}, to include consequences of both smallness and simply finiteness of the Carleson measure condition on the perturbation. Subsequent work enlarged the class of domains in which these results
hold, as well as the class of operators. In \cite{Esc}, Escauriaza showed that the small Carleson condition preserved a refined property of the density of elliptic measure, namely that the logarithm of the density belongs to the Sarason space of Vanishing Mean Oscillation (VMO). Dahlberg's theorem and Escauriaza's theorem were both extended to chord-arc domains in \cite{MPT1} and \cite{MPT2}.  The theory has also been extended to parabolic operators; see \cite{RN} for non-cylindrical domains.
For equations in non-divergence form, Rios (\cite{Ri}) showed that the $A_\infty$ property of elliptic measure is preserved under finiteness of the Carleson measure condition, and Dindo\v{s}-Wall gave the sharp result assuming the small Carleson condition.
All of this was done, however, in the case of real coefficients. Perturbation theory in the complex coefficient case is far less developed; we will mention a few examples momentarily.

In the paper \cite{DPcplx} we established a new theory of interior regularity for solutions to complex coefficient second order divergence form operators, which can be viewed as a weaker substitute for the De Giorgi-Nash-Moser regularity theory for real elliptic PDEs. 
Specifically, we considered operators of the form
$\mathcal{L}=\mbox{div} A(x)\nabla +B(x)\cdot\nabla$, with certain algebraic conditions on the matrix $A$ (called $p$-ellipticity) and a natural minimal scaling condition on $B$. No additional smoothness of the coefficients is assumed.
When the coefficients of $A$ are real, or when $p=2$, the $p$-ellipticity condition is just the usual uniform ellipticity condition.

We then applied this regularity theory to the question of solvability of the $L^p$ Dirichlet problem for operators with complex coefficients. We always assume that the matrices are in canonical form, as defined below, and \cite{DPcplx} contains a discussion of how to put an operator with lower terms in canonical form.
In particular, we established the following result.

\begin{theorem}\label{S3:T1}  Let $1<p<\infty$, and let $\Omega$ be the upper half-space ${\mathbb R}^n_+=\{(x_0,x'):\,x_0>0\mbox{ and } x'\in{\mathbb R}^{n-1}\}$. Consider the operator 
$$ \mathcal Lu = \partial_{i}\left(A_{ij}(x)\partial_{j}u\right) 
+B_{i}(x)\partial_{i}u$$
and assume that the matrix $A$ in canonical form is $p$-elliptic with constants $\lambda_p,\Lambda$. Canonical forms mean that $A_{00}=1$ and $\mathscr{I}m\,A_{0j}=0$ for all $1\leq j \leq n-1$.
Assume that
\begin{equation}\label{Car_hatAA}
d{\mu}(x)=\sup_{B_{\delta(x)/2}(x)}\left[|\nabla{A}|^{2} + |B|^{2} \right]\delta(x)\,dx
\end{equation}
is a Carleson measure in $\Omega$. Let us also denote 
\begin{equation}\label{Car_hatAAB}
d{\mu'}(x)=\sup_{B_{\delta(x)/2}(x)}\left[\textstyle\sum_j\left|\partial_0 A_{0j}\right|^{2}+\left|\textstyle\sum_j\partial_j A_{0j}\right|^{2} + |B|^{2} \right]\delta(x)\,dx.
\end{equation}

Then there exist $K=K(\lambda_p,\Lambda,\|\mu\|_{\mathcal C},n,p)>0$ and $C(\lambda_p, \Lambda ,\|\mu\|_{\mathcal C}, n,p)>0$ such that if
\begin{equation}\label{Small-Cond3}
\|\mu'\|_{\mathcal C} < K
\end{equation}
then the $L^p$-Dirichlet problem  
 
 \begin{equation}\label{E:D2new2}
\begin{cases}
\,\,{\mathcal L}u=0 
& \text{in } \Omega,
\\[4pt]
\quad u=f & \text{ for $\sigma$-a.e. }\,x\in\partial\Omega, 
\\[4pt]
\tilde{N}_{p,a}(u) \in L^{p}(\partial \Omega), &
\end{cases}
\end{equation}
is solvable and the estimate
\begin{equation}\label{Main-Est}
\|\tilde{N}_{p,a} (u)\|_{L^{p}(\partial \Omega)}\leq C\|f\|_{L^{p}(\partial \Omega;{\BBC})}
\end{equation}
holds for all energy solutions $u$ with datum $f$.
\end{theorem}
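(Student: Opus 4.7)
My strategy is to follow the classical program for $L^p$ Dirichlet solvability via the non-tangential maximal function/square function comparison, adapted to the complex-coefficient setting where pointwise De~Giorgi--Nash--Moser regularity is unavailable. The substitute, developed in \cite{DPcplx}, is to work with the auxiliary quantity $v=|u|^{p/2}$: its gradient on $\{u\neq 0\}$ is controlled by $|u|^{p-2}|\nabla u|^2$, precisely the bulk quantity that $p$-ellipticity produces. I would first fix an energy solution $u$ with smooth, compactly supported boundary data $f$, establish the a priori estimate (\ref{Main-Est}), and then extend to arbitrary $f\in L^p(\partial\Omega;\mathbb{C})$ by approximation.

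The two main estimates are (i) $\|S(v)\|_{L^p(\partial\Omega)}\lesssim \|f\|_{L^p}$ and (ii) $\|\tilde N_{p,a}(u)\|_{L^p}\lesssim \|S(v)\|_{L^p}+\|f\|_{L^p}$, where $S$ is a square function built from the density $|u|^{p-2}|\nabla u|^2\delta(x)\,dx$. Both are obtained by testing $\mathcal{L}u=0$ against a function of the form $|u|^{p-2}\bar u\,\varphi$ for a suitable Whitney-adapted cutoff $\varphi$, and then invoking the $p$-ellipticity identity
\begin{equation*}
\mathrm{Re}\int A\nabla u\cdot\overline{\nabla(|u|^{p-2}u)}\,\varphi\,dx\;\geq\;\lambda_p\int|u|^{p-2}|\nabla u|^2\,\varphi\,dx.
\end{equation*}
Integration by parts converts the left-hand side into a sum of (a) boundary/sawtooth terms controllable by $\tilde N_{p,a}(u)$ on $\partial\Omega$, (b) favorable square-function quantities, and (c) error terms linear in $\nabla A$ or $B$.

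The principal obstacle is to separate the error terms in (c) into two classes: \emph{absorbable} terms that can be handled by a Cauchy--Schwarz against the $p$-elliptic bulk combined with a standard Carleson-measure trace inequality --- these need only $\mu\in\mathcal{C}$ without smallness --- and \emph{non-absorbable} terms that must be killed by smallness of the relevant Carleson norm. The canonical form $A_{00}=1$ and $\mathscr{I}m\,A_{0j}=0$ is designed precisely to guarantee that, after an integration by parts in the transverse direction $x_0$ exploiting the block structure of $A$, the non-absorbable contributions reduce to exactly the quantities appearing in $\mu'$: $\partial_0 A_{0j}$, $\sum_j\partial_j A_{0j}$, and $B$. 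The smallness hypothesis $\|\mu'\|_{\mathcal{C}}<K$ then eliminates these terms with positive margin, closing the inequality. Once (\ref{Main-Est}) is established for smooth data, $L^p$ solvability for arbitrary $f\in L^p(\partial\Omega;\mathbb{C})$ follows by a standard density and limiting argument, together with the uniqueness theory for the $L^p$ Dirichlet problem.
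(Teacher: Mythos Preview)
This theorem is not proved in the present paper; it is quoted from \cite{DPcplx}. Nevertheless, Sections~3--4 of this paper reproduce and extend that machinery to the perturbed setting, so the intended argument is visible, and your plan is essentially the right one: test the equation against $|u|^{p-2}\bar u\,\varphi$, invoke $p$-ellipticity to produce the bulk term $\int |u|^{p-2}|\nabla u|^2\delta$, integrate by parts, and use the canonical form $A_{00}=1$, $\mathscr{I}m\,A_{0j}=0$ so that the only ``bad'' Carleson errors are those in $\mu'$.

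Two structural points differ from what you wrote. First, the square-function estimate one actually obtains is not $\|S_p(u)\|_{L^p}\lesssim\|f\|_{L^p}$ outright, but rather
\[
\int_{\partial\Omega}[S_{p,a}(u)]^p\;\lesssim\;\int_{\partial\Omega}|f|^p\;+\;C\|\mu'\|_{\mathcal C}\int_{\partial\Omega}[\tilde N_{p,a}(u)]^p,
\]
i.e.\ there is a residual small multiple of the nontangential maximal function on the right (this is Corollary~\ref{S4:C2} here, Corollary~5.2 in \cite{DPcplx}). One then feeds in the reverse bound $\|\tilde N_{p,a}(u)\|_{L^p}\lesssim\|S_{p,a}(u)\|_{L^p}$ and absorbs, which is where the smallness threshold $K$ on $\|\mu'\|_{\mathcal C}$ actually enters. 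Second, that reverse bound $\tilde N\lesssim S$ is \emph{not} obtained by the same testing procedure against $|u|^{p-2}\bar u\,\varphi$; it requires a separate stopping-time/good-$\lambda$ argument on sawtooth regions, built on a local $L^2$ estimate of the type in Lemma~\ref{S3:L8} (this is Proposition~\ref{S3:C7} here, Corollary~6.2 in \cite{DPcplx}). Your sentence ``Both are obtained by testing $\mathcal L u=0$ against\ldots'' obscures this distinction. Finally, the a~priori finiteness needed to run the absorption is secured in \cite{DPcplx} (and here) by first working on truncated strips $\Omega^h$ with smooth coefficients and then passing to the limit, rather than by assuming smooth compactly supported data alone.
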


We now aim to replace the criteria (\eqref{Car_hatAA} and \eqref{Car_hatAAB}) under which solvability can be deduced by a weaker condition, which previously was established in \cite{DPP} only for real valued elliptic operators. 
The Carleson conditions \eqref{Car_hatAA} and \eqref{Car_hatAAB} in the theorem above require the matrix $A$ to be differentiable while in \cite{DPP}, 
it was shown that a condition on the oscillation of the coefficients of $A$ suffices. Namely, it was proven that in the real variable case 
Theorem \ref{S3:T1} holds when 
\begin{equation}\label{oscT1}
d\mu =  \left( \delta(x)^{-1}\left(\mbox{osc}_{B_{\delta(x)/2}(x)}A\right)^2 + \sup_{B_{\delta(x)/2}(x)} |B|^2 \delta(x)\right) dx
\end{equation}
is a small Carleson measure. Here we define
$$\mbox{osc}_{K}A=\sup_{i,j}\left\{\left|A_{ij}(x)-A_{ij}(y)\right|:\,\mbox{for }x,y\in K|\right\}.$$

The proof of solvability in \cite{DPP} under this weaker assumption on the coefficients  used a perturbation result of Dahlberg type, which states that if ${\mathcal L}_0$ and ${\mathcal L}_1$ are two divergence form elliptic operators whose
coefficients are close in the sense that 
 \begin{equation}\label{eqdm1}
 dm(x)=\sup_{B_{\delta(x)/2}(x)}\left[|A^0 - A^1|^{2} \delta^{-1}(x)+ |B^0 - B^1|^{2} \delta(x)\right]\,dx
\end{equation}
is a small  Carleson measure, then the $L^p$ solvability of ${\mathcal L}_0$ implies the $L^p$ solvability of ${\mathcal L}_1$.

An analogous result was not known in the complex coefficient case. In fact, all known proofs of the Dahlberg's perturbation theorem rely heavily on properties
of solutions that only hold in the real variable case, such as the maximum principle or the $A_\infty$ property of the elliptic measure. 

In general the literature on solvability of boundary value problems for complex coefficient operators in $\mathbb R^n$ has been fairly limited, except when the matrix $A$ is of block form. When the matrix $A$ in $\mathcal L = \mbox{div} A(x)\nabla$ has block form, there are numerous results on on $L^p$-solvability of the Dirichlet, regularity and Neumann problems, starting with the solution of the Kato problem, where the coefficients of the block matrix are also assumed to be independent of the transverse variable. This assumption on the transverse variable is usually referred in literature as \lq\lq $t$-independent" - in our notation it is the $x_0$ variable - referring to the situation when the domain is the upper half space.
See \cite{AHLMT} and \cite{HM} and the references therein. 
For matrices that are not of block form, there are solvability results in a few special cases, under the assumption that the solutions satisfy De Giorgi - Nash - Moser estimates. This latter assumption is not generally verifiable or, as far as we know, linked to any specific structural assumption on the matrix. For examples of results obtained under this assumption, see \cite{AAAHK} and \cite{HKMPreg}; the latter paper is also concerned with operators that are $t$-independent.

Finally, there are perturbation results in a variety of special cases, such as \cite{AAM} and \cite{AAH}; the first paper shows that solvability in $L^2$ implies solvability in $L^p$ for $p$ near $2$, and the second paper has $L^2$-solvability results for small $L^\infty$ perturbations of real elliptic operators when the complex matrix is $t$-independent.

In this paper we show that Dahlberg's perturbation theory applies to the class of complex coefficient elliptic operators ${\mathcal L}_0$ satisfying the assumptions of Theorem \ref{S3:T1} under only the structural, or algebraic, assumption of $p$-ellipticity.
We do not assume $t$-independence, nor do we assume that solutions satisfy De Giorgi - Nash - Moser estimates.
The perturbation criteria is the same as that for real coefficient operators, although here it should be observed that the ``smallness" of the Carleson measure will also be a function of the $p$ in $p$-ellipticity.

\begin{theorem}\label{MainPerturb}
Let $\mathcal{L}_0$ satisfy the conditions of Theorem \ref{S3:T1}. Let $\mathcal{L}_1=\mbox{\rm div} A^1(x)\nabla +B^1(x)\cdot\nabla$ be a perturbation of
 $\mathcal{L}_0$ in the following sense:
 
 \begin{equation}\label{eqdm}
 dm(x)=\sup_{B_{\delta(x)/2}(x)}\left[|A^0 - A^1|^{2} \delta^{-1}(x)+ |B^0 - B^1|^{2} \delta(x)\right]\,dx
\end{equation}
is a Carleson measure in $\Omega$.

Then there exist $K=K(\lambda_p,\Lambda,\|\mu\|_{\mathcal C},n,p)>0$ and $C(\lambda_p, \Lambda ,\|\mu\|_{\mathcal C}, n,p)>0$ such that if
\begin{equation}\label{Small-Cond}
\|m\|_{\mathcal C} < K
\end{equation}
then the matrix $A^1$ is p-elliptic and the $L^p$-Dirichlet problem is also solvable for the operator $\mathcal{L}_1$.
That is,

 \begin{equation}\label{E:D2new}
\begin{cases}
\,\,{\mathcal L_1}u=0 
& \text{in } \Omega,
\\[4pt]
\quad u=f & \text{ for $\sigma$-a.e. }\,x\in\partial\Omega, 
\\[4pt]
\tilde{N}_{p,a}(u) \in L^{p}(\partial \Omega), &
\end{cases}
\end{equation}
is solvable and the estimate
\begin{equation}\label{Main-Est2}
\|\tilde{N}_{p,a} (u)\|_{L^{p}(\partial \Omega)}\leq C\|f\|_{L^{p}(\partial \Omega;{\BBC})}
\end{equation}
holds for all energy solutions $u$ of $\mathcal{L}_1u=0$ with datum $f$.
\end{theorem}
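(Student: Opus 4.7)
The plan is to execute the classical Dahlberg--FKP perturbation scheme in the complex-coefficient setting, replacing the pointwise Green's function and elliptic-measure tools unavailable here with a duality argument built on the $L^p$ solvability of $\mathcal L_0$ from Theorem~\ref{S3:T1} and the $p$-elliptic interior regularity of \cite{DPcplx}. Given smooth boundary data $f$, let $u$ and $v$ be the energy solutions of $\mathcal L_1 u=0$ and $\mathcal L_0 v=0$ with trace $f$, and put $w:=u-v$. Then $w$ has zero trace on $\partial\Omega$ and
\begin{equation*}
\mathcal L_0 w \;=\; -\di\bigl((A^0-A^1)\nabla u\bigr)\;-\;(B^0-B^1)\cdot\nabla u.
\end{equation*}
Since Theorem~\ref{S3:T1} gives $\|\tilde{N}_{p,a}(v)\|_{L^p}\le C\|f\|_{L^p}$, the task reduces to establishing
\begin{equation*}
\|\tilde{N}_{p,a}(w)\|_{L^p(\partial\Omega)}\;\le\;C\,\|m\|_{\mathcal C}^{1/2}\,\|\tilde{N}_{p,a}(u)\|_{L^p(\partial\Omega)},
\end{equation*}
after which the triangle inequality together with the smallness \eqref{Small-Cond} absorbs the $\tilde{N}(u)$ term and delivers \eqref{Main-Est2}. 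The $p$-ellipticity of $A^1$ I would extract separately, using that the algebraic $p$-ellipticity condition is open on the space of matrices and that Carleson smallness of $|A^0-A^1|^2\delta^{-1}$, combined with the standing $L^\infty$-ellipticity of $A^1$, suffices for the Whitney-averaged versions of $A^1$ that actually enter the solvability theory.

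For the central bound on $\tilde{N}(w)$ I would proceed by duality against the adjoint $\mathcal L_0^*$. After reducing $\mathcal L_0^*$ to canonical form (by the procedure described in \cite{DPcplx}), the hypotheses of Theorem~\ref{S3:T1} are inherited with the dual exponent $p'$, so for every $g\in L^{p'}(\partial\Omega)$ there is an energy solution $\Psi$ of $\mathcal L_0^*\Psi=0$ with trace $g$ and $\|\tilde{N}_{p',a}(\Psi)\|_{L^{p'}}\lesssim\|g\|_{L^{p'}}$. Integration by parts against $\Psi$, using $w|_{\partial\Omega}=0$, yields
\begin{equation*}
\int_{\partial\Omega} w\,\bar g\,d\sigma \;=\; \int_\Omega (A^1-A^0)\nabla u\cdot\overline{\nabla\Psi}\,dx\;+\;\int_\Omega (B^1-B^0)\cdot\nabla u\;\overline{\Psi}\,dx.
\end{equation*}
Cauchy--Schwarz on Whitney boxes followed by Carleson's embedding theorem absorbs the $|A^0-A^1|^2\delta^{-1}$ and $|B^0-B^1|^2\delta$ factors into $\|m\|_{\mathcal C}$; what remains is a product of area-function quantities in $u$ and $\Psi$, which by the square-function/non-tangential equivalences for $p$-elliptic solutions established in \cite{DPcplx} is dominated by $\|\tilde{N}_{p,a}(u)\|_{L^p}\|\tilde{N}_{p',a}(\Psi)\|_{L^{p'}}$. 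Taking the supremum over $g$ gives the trace estimate on $w$; a second pairing with a bump test function localized to a fixed Whitney box upgrades this to pointwise control on the local $L^p$-averages of $w$ defining $\tilde{N}_{p,a}(w)$.

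The principal obstacle is this last upgrade, from an $L^p$ trace bound to a genuine pointwise-in-$X$ nontangential bound on $w$, in a setting where maximum principle and pointwise Green's function bounds are not at our disposal. The substitute I intend to use is twofold: the reverse bound $\tilde{N}\lesssim S$ for solutions of $\mathcal L_0$, which is a byproduct of the arguments leading to Theorem~\ref{S3:T1}, and the reverse-H\"older/Caccioppoli-type interior estimates of \cite{DPcplx}, which together permit localization of the duality pairing to Whitney balls and translate the integrated inequality into a pointwise one at the level of $\tilde{N}_{p,a}$. A purely bookkeeping item is that $K$ must be chosen below the implicit constants appearing in those $S$--$\tilde{N}$ comparisons, whose dependence on $\lambda_p,\Lambda$ and $\|\mu\|_{\mathcal C}$ explains the stated dependence of $K$ on the data.
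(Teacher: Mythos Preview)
Your duality scheme has a genuine gap. You write $w=u-v$ with $w|_{\partial\Omega}=0$ and then assert the identity
\[
\int_{\partial\Omega} w\,\bar g\,d\sigma \;=\; \int_\Omega (A^1-A^0)\nabla u\cdot\overline{\nabla\Psi}\,dx\;+\;\int_\Omega (B^1-B^0)\cdot\nabla u\;\overline{\Psi}\,dx,
\]
but the left-hand side vanishes identically since $w$ has zero trace. What Green's formula actually produces, pairing $\mathcal L_0 w$ against an adjoint solution $\Psi$, is a boundary term involving the conormal derivative of $w$ and the trace $g$ of $\Psi$, not $\int w\bar g$. So the displayed identity does not recover any information about the size of $w$, and the subsequent sentence ``Taking the supremum over $g$ gives the trace estimate on $w$'' is empty.

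Even if you repair the pairing (say by testing against an adjoint solution with data supported away from the boundary, or by working on a strip $\{x_0>r\}$), the deeper obstacle you flag remains unresolved: $w$ is a solution of \emph{neither} $\mathcal L_0$ nor $\mathcal L_1$, so the $\tilde N\lesssim S$ comparison you invoke from \cite{DPcplx} simply does not apply to $w$. Your proposed workaround---localized duality plus interior Caccioppoli---would at best control Whitney averages of $w$ by area integrals of $u$ and of an adjoint solution $\Psi$ adapted to that Whitney box, but summing such local estimates into a global $\tilde N_{p,a}(w)$ bound is exactly the step where the real-coefficient arguments use elliptic measure or the maximum principle, and you have not supplied a substitute. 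There is also a secondary issue: putting $\mathcal L_0^*$ into canonical form (the condition is on the \emph{first row}, not the first column) is not automatic and requires the reduction procedure of \cite{DPcplx}, which introduces additional first-order terms whose Carleson control you would need to track.

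The paper avoids all of this by \emph{not} introducing $v$ or $w$ at all. It works directly with the $\mathcal L_1$-solution $u$, rewriting $\mathcal L_1 u=0$ as the inhomogeneous equation $\mathcal L_0 u=\di(\varepsilon\nabla u)+\beta\cdot\nabla u$ with $\varepsilon=A^0-A^1$, $\beta=B^0-B^1$. It then redoes the integration-by-parts calculations from \cite{DPcplx} that underlie the $S_p$--$\tilde N_p$ comparison, now carrying along the extra $\varepsilon,\beta$ terms; these produce errors of size $\|m\|_{\mathcal C}^{1/2}$ times either $\mathcal I$ (the local square-function quantity, absorbable on the left) or the nontangential maximal function (absorbable after closing the loop). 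This yields directly
\[
\|\tilde N_{p,a}(u)\|_{L^p}^p \;\le\; C_1\|f\|_{L^p}^p \;+\; C_2\bigl(\|\mu'\|_{\mathcal C}+\|m\|_{\mathcal C}^{1/2}\bigr)\|\tilde N_{p,a}(u)\|_{L^p}^p,
\]
and smallness of $\|m\|_{\mathcal C}$ finishes the argument. No adjoint, no duality, and crucially no need to control $\tilde N$ of a non-solution.
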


This perturbation result is sufficient to establish a new criterion for solvability of the $L^p$ Dirichlet problem for operators with complex coefficients, analogous to the one in \cite{DPP} in the real case.

\begin{theorem}\label{Tmain}
 Let $1<p<\infty$, and let $\Omega$ be the upper half-space ${\mathbb R}^n_+=\{(x_0,x'):\,x_0>0\mbox{ and } x'\in{\mathbb R}^{n-1}\}$. Consider the operator 
$$ \mathcal Lu = \partial_{i}\left(A_{ij}(x)\partial_{j}u\right) 
+B_{i}(x)\partial_{i}u$$
and assume that the matrix $A$ is $p$-elliptic with constants $\lambda_p,\Lambda$, $A_{00}=1$ and $\mathscr{I}m\,A_{0j}=0$ for all $1\leq j \leq n-1$.
Assume that

\begin{equation}\label{oscT}
d\mu =  \left( \delta(x)^{-1}\left(\mbox{osc}_{B_{\delta(x)/2}(x)}A\right)^2 + \sup_{B_{\delta(x)/2}(x)} |B|^2 \delta(x)\right) dx
\end{equation}
is a Carleson measure in $\Omega$. Let
\begin{equation}\label{oscT2}
d\mu' =  \left( \delta(x)^{-1}\sum_{j=0}^{n-1}\left(\mbox{osc}_{B_{\delta(x)/2}(x)}A_{0j}\right)^2 + \sup_{B_{\delta(x)/2}(x)} |B|^2 \delta(x)\right) dx.
\end{equation}

Then there exist $K=K(\lambda_p,\Lambda,\|\mu\|_{\mathcal C},n,p)>0$ and $C(\lambda_p, \Lambda ,\|\mu\|_{\mathcal C}, n,p)>0$ such that if
\begin{equation}\label{Small-Cond2}
\|\mu'\|_{\mathcal C} < K
\end{equation}
then the $L^p$-Dirichlet problem  
 
 \begin{equation}\label{E:D2}
\begin{cases}
\,\,{\mathcal L}u=0 
& \text{in } \Omega,
\\[4pt]
\quad u=f & \text{ for $\sigma$-a.e. }\,x\in\partial\Omega, 
\\[4pt]
\tilde{N}_{p,a}(u) \in L^{p}(\partial \Omega), &
\end{cases}
\end{equation}
is solvable and the estimate
\begin{equation}\label{Main-Est3}
\|\tilde{N}_{p,a} (u)\|_{L^{p}(\partial \Omega)}\leq C\|f\|_{L^{p}(\partial \Omega;{\BBC})}
\end{equation}
holds for all energy solutions $u$ with datum $f$.
\end{theorem}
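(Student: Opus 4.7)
The plan is to deduce Theorem \ref{Tmain} from Theorem \ref{MainPerturb} by approximating $\mathcal{L}$ by a smooth operator $\mathcal{L}_0$ that satisfies the hypotheses of Theorem \ref{S3:T1}, and then showing that the pair $(\mathcal{L}_0,\mathcal{L})$ falls in the perturbation regime of Theorem \ref{MainPerturb}. This is the same template used for the real-variable analogue in \cite{DPP}; the complex setting only requires that the mollification preserve the canonical form $A_{00}=1$, $\mathscr{I}m\, A_{0j}=0$ and the $p$-ellipticity constants $\lambda_p,\Lambda$.

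Concretely, I would define $A^0$ as a Whitney-scale mollification of $A$, for example
$$A^0(x)=\int A(x-y)\,\varphi\!\left(\frac{y}{\eta\delta(x)}\right)\frac{dy}{(\eta\delta(x))^n},$$
where $\varphi\geq 0$ is a smooth bump with $\int\varphi=1$, supported in the unit ball, and $\eta\in(0,1/4)$ is a small parameter to be chosen. Since $A_{00}\equiv 1$ and $\mathscr{I}m\, A_{0j}\equiv 0$ are unchanged under averaging, $A^0$ remains in canonical form, and $p$-ellipticity, being a condition that is linear in the entries of $A$ for each fixed $\xi\in\BBC^n$, is preserved with the same constants $\lambda_p,\Lambda$. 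Standard mollifier bounds give
$$|A^0(x)-A(x)|\lesssim \mbox{osc}_{B_{\eta\delta(x)}(x)}A, \qquad |\nabla A^0(x)|\lesssim (\eta\delta(x))^{-1}\mbox{osc}_{B_{\eta\delta(x)}(x)}A,$$
and the analogous estimates for the top-row entries $A^0_{0j}$. These translate \eqref{oscT} and \eqref{oscT2} into the Carleson measure conditions \eqref{Car_hatAA} and \eqref{Car_hatAAB} for the operator $\mathcal{L}_0=\mbox{div}\,A^0\nabla+B\cdot\nabla$, with norms bounded by $\eta^{-2}\|\mu\|_{\mathcal C}$ and $\eta^{-2}\|\mu'\|_{\mathcal C}$, respectively. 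Choosing $\|\mu'\|_{\mathcal C}$ small enough that the resulting top-row norm falls below the threshold \eqref{Small-Cond3}, Theorem \ref{S3:T1} applied to $\mathcal{L}_0$ yields $L^p$ solvability for $\mathcal{L}_0$ with estimate \eqref{Main-Est}.

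It remains to transfer solvability from $\mathcal{L}_0$ to $\mathcal{L}_1=\mathcal{L}$ via Theorem \ref{MainPerturb}. Taking $B^0=B^1=B$ in \eqref{eqdm}, the perturbation measure $m$ is bounded pointwise by $(\mbox{osc}_{B_{\eta\delta}(x)}A)^2/\delta$, so its Carleson norm is $\lesssim \|\mu\|_{\mathcal C}$. This is the main obstacle: $\|\mu\|_{\mathcal C}$ is only assumed finite, not small, so the naive estimate does not immediately yield $\|m\|_{\mathcal C}<K$ in \eqref{Small-Cond}. Following \cite{DPP}, one resolves this by decomposing the total perturbation into finitely many steps of small Carleson norm via a stopping-time argument on subregions where the Carleson mass of $(\mbox{osc}\,A)^2/\delta$ is below the threshold $K$ from Theorem \ref{MainPerturb}, and iterating that theorem across these steps. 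The number of iterations, and hence the final constants in \eqref{Main-Est3}, depend only on $\|\mu\|_{\mathcal C}$, $\|\mu'\|_{\mathcal C}$, and $\lambda_p,\Lambda,n,p$, which produces the smallness threshold $K$ and the constant $C$ asserted in Theorem \ref{Tmain}.
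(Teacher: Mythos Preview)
Your first two steps---the Whitney-scale mollification $A\mapsto A^0$ preserving canonical form and $p$-ellipticity, and the verification that $\mathcal L_0=\mbox{div}\,A^0\nabla+B\cdot\nabla$ satisfies the hypotheses of Theorem~\ref{S3:T1}---match the paper. The divergence is in your last paragraph, where you invoke Theorem~\ref{MainPerturb} as a black box and then confront the mismatch between the smallness required in \eqref{Small-Cond} and the merely finite bound $\|m\|_{\mathcal C}\lesssim\|\mu\|_{\mathcal C}$. Your proposed fix, a stopping-time decomposition into finitely many small-Carleson steps iterated via Theorem~\ref{MainPerturb}, is the gap. Even in the real-coefficient setting, the large-to-small Carleson reduction (as in \cite{FKP}) yields only $A_\infty$ of elliptic measure, hence $L^q$ solvability for \emph{some} $q$, not preservation of $L^p$ for the given $p$; and the underlying mechanism rests on harmonic-measure comparisons and maximum principles that are unavailable for complex coefficients---exactly the obstruction the introduction highlights. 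No such iteration is carried out in \cite{DPP} either.

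The paper avoids the obstacle by not treating Theorem~\ref{MainPerturb} as a finished statement but by going back to its two ingredients separately, applied with \emph{different} approximating operators. For the direction $\tilde N\lesssim S$ it applies Proposition~\ref{S3:C7} to the pair $(\mathcal L_0,\mathcal L_1)$ with the fully mollified $\mathcal L_0$, obtaining \eqref{dferg} with a constant that is allowed to depend on $\|\mu\|_{\mathcal C}$. For the direction $\int S_p^p\lesssim \int|f|^p+(\text{small})\int\tilde N^p$ it introduces a \emph{second} comparison operator $\bar{\mathcal L}_0$ in which only the top-row entries $A_{0j}$ are mollified. The key observation is that in Lemma~\ref{S3:L4} the only derivatives of the unperturbed coefficients that ever appear are $\partial_0 A^0_{0j}$ and $\partial_j A^0_{0j}$, so differentiability of the remaining rows is not used; with $\bar{\mathcal L}_0$ in this role both $\|\bar m\|_{\mathcal C}$ and $\|\bar\mu'\|_{\mathcal C}$ are controlled by $\|\mu'\|_{\mathcal C}$, which \emph{is} assumed small. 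Combining the two estimates and absorbing the last term then gives \eqref{Main-Est3} directly, with no iteration. The idea you are missing is precisely this second, partially mollified operator.
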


\begin{corollary}\label{block} Suppose the operator $\mathcal L$ on $\mathbb R^n_+$ has the form
$${\mathcal L}u=\partial^2_0 u +\sum_{i,j=1}^{n-1}\partial_i(A_{ij}\partial_j u)$$
where the matrix $A$ has coefficients satisfying the Carleson condition \eqref{oscT}.

Then for all $1<p<\infty$ for which $A$ is $p$-elliptic, the $L^p$-Dirichlet problem  \eqref{E:D} is solvable for $\mathcal L$
and the estimate
\begin{equation}\label{Main-Est2z}
\|\tilde{N}_{p,a} u\|_{L^{p}(\partial \Omega)}\leq C\|f\|_{L^{p}(\partial \Omega;{\BBC})}
\end{equation}
holds for all energy solutions $u$ with datum $f$. 
\end{corollary}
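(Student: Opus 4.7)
The plan is to deduce this corollary as an immediate consequence of Theorem \ref{Tmain}. The key observation is that for the block operator $\mathcal{L}$ on $\mathbb{R}^n_+$, the full $n\times n$ coefficient matrix has $A_{00}=1$, $A_{0j}=A_{j0}=0$ for $1\leq j\leq n-1$, and the lower-order term satisfies $B\equiv 0$. In particular, the canonical form conditions $A_{00}=1$ and $\mathscr{I}m\, A_{0j}=0$ required by Theorem \ref{Tmain} hold automatically, and the $p$-ellipticity of the full matrix follows from the assumed $p$-ellipticity of the $(n-1)\times(n-1)$ block $A$ together with the fact that $p$-ellipticity of a block-diagonal matrix reduces to $p$-ellipticity of each block.

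I would then inspect the two Carleson measures appearing in Theorem \ref{Tmain}. The measure $\mu$ defined by \eqref{oscT} is finite by hypothesis (this is the Carleson condition on the oscillation of $A$). The measure $\mu'$ defined by \eqref{oscT2} involves only the oscillations of the entries $A_{0j}$ for $0\leq j\leq n-1$ together with $|B|^2\delta(x)$; since each of these entries is constant and $B\equiv 0$, the density of $\mu'$ vanishes pointwise and therefore $\|\mu'\|_{\mathcal{C}}=0$. Hence the smallness hypothesis \eqref{Small-Cond2} is trivially satisfied for any $K>0$ produced by Theorem \ref{Tmain}. Invoking that theorem then yields the solvability of the $L^p$-Dirichlet problem for $\mathcal{L}$ along with the estimate \eqref{Main-Est2z}.

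There is no substantive technical obstacle in this argument; the content of the corollary is precisely the recognition that block-form operators without drift are those for which the ``smallness'' requirement on $\mu'$ is automatic, leaving only the finiteness of the oscillation-type Carleson measure $\mu$ as a meaningful hypothesis. All the analytic work has been absorbed into the proof of Theorem \ref{Tmain}, which in turn follows from the perturbation result Theorem \ref{MainPerturb} applied to approximating operators constructed from $A$.
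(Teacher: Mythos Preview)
Your proposal is correct and follows essentially the same approach as the paper: the corollary is deduced from Theorem~\ref{Tmain} by observing that for a block-form operator with $A_{0j}=\delta_{0j}$ and $B\equiv 0$, the measure $\mu'$ in \eqref{oscT2} vanishes identically, so the smallness condition \eqref{Small-Cond2} is automatically satisfied. The paper records this in a single sentence at the end of the proof of Theorem~\ref{Tmain}; your added remarks on the canonical form and on $p$-ellipticity of the block-diagonal matrix are correct and make the deduction fully explicit.
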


In the statements of the two theorems above, we have used some notation that will be precisely defined in the following section. In section 3 we prove bounds for the square function in terms of the boundary data and the nontangential maximal function. Section 4 contains the converse estimates. Finally, in section 5 we give proofs of Theorems \ref{MainPerturb} and \ref{Tmain}.

\section{Basic notions and definitions}

\subsection{$p$-ellipticity}
The concept of $p$-ellipticity
was introduced in \cite{CM}, where the authors investigated the $L^p$-dissipativity of second order divergence complex coefficient operators. Later, Carbonaro and Dragi\v{c}evi\'c \cite{CD} gave an equivalent definition and coined the term ``$p$-ellipticity".  It is this definition that was
most useful for the results of  \cite{DPcplx}. 
To introduce this, we define, for $p>1$, the ${\mathbb R}$-linear map $\cJ_p:{\mathbb C}^n\to {\mathbb C}^n$ by
$$\cJ_p(\alpha+i\beta)=\frac{\alpha}{p}+i\frac{\beta}{p'}$$
where $p'=p/(p-1)$ and $\alpha,\beta\in{\mathbb R}^n$.

\begin{definition}\label{pellipticity} Let $\Omega\subset{\mathbb R}^n$. Let $A:\Omega\to M_n(\mathbb C)$, where $M_n(\mathbb C)$ is the space of $n\times n$ complex valued matrices. We say that $A$ is $p$-elliptic if for a.e. $x\in\Omega$
\begin{equation}\label{pEll}
\mathscr{R}e\,\langle A(x)\xi,\cJ_p\xi\rangle \ge \lambda_p|\xi|^2,\qquad\forall \xi\in{\mathbb C}^n
\end{equation}
for some $\lambda_p>0$ and there exists $\Lambda>0$ such that 
\begin{equation}
|\langle A(x)\xi,\eta \rangle| \le \Lambda |\xi| |\eta|, \qquad\forall \xi, \,\eta\in{\mathbb C}^n.
\end{equation}
\end{definition}

It is now easy to observe that the notion of $2$-ellipticity coincides with the usual ellipticity condition for complex matrices.
As shown in \cite{CD} if $A$ is elliptic, then there exists $\mu(A)>0$ such that $A$ is $p$-elliptic if and only if
$\left|1-\frac2p\right|<\mu(A).$
Also $\mu(A)=\infty$ if and only if $A$ is real valued.

\subsection{Nontangential maximal and square functions}
\label{SS:NTS}

On a domain of the form 
\begin{equation}\label{Omega-111}
\Omega=\{(x_0,x')\in\BBR\times{\BBR}^{n-1}:\, x_0>\phi(x')\},
\end{equation}
where $\phi:\BBR^{n-1}\to\BBR$ is a Lipschitz function with Lipschitz constant given by 
$L:=\|\nabla\phi\|_{L^\infty(\BBR^{n-1})}$, define for each point $x=(x_0,x')\in\Omega$
\begin{equation}\label{PTFCC}
\delta(x):=x_0-\phi(x')\approx\mbox{dist}(x,\partial\Omega).
\end{equation}
In other words, $\delta(x)$ is comparable to the distance of the point $x$ from the boundary of $\Omega$.

\begin{definition}\label{DEF-1}
A cone of aperture $a>0$ is a non-tangential approach region to the point $Q=(x_0,x') \in \partial\Omega$ defined as
\begin{equation}\label{TFC-6}
\Gamma_{a}(Q)=\{(y_0,y')\in\Omega:\,a|x_0-y_0|>|x'-y'|\}.
\end{equation}
\end{definition}

We require $1/a>L$, otherwise the aperture of the cone is too large and might not lie inside $\Omega$.  When $\Omega=\BBR^n_+$ all parameters $a>0$ may be considered.
Sometimes it is necessary to truncate $\Gamma(Q)$ at height $h$, in which case we write
\begin{equation}\label{TRe3}
\Gamma_{a}^{h}(Q):=\Gamma_{a}(Q)\cap\{x\in\Omega:\,\delta(x)\leq h\}.
\end{equation}

\begin{equation}\label{SSS-1}
\|S_{a}(u)\|^{2}_{L^{2}(\partial\Omega)}\approx\int_{\Omega}|\nabla u(x)|^{2}\delta(x)\,dx.
\end{equation}

In [DPP], a ``$p$-adapted" square function was introduced. The usual square function is the $p$-adapted square function when $p=2$. In the following definition, when $p<2$ we use the convention that the expression $|\nabla u(x)|^{2} |u(x)|^{p-2}$ is zero whenever
$\nabla u(x)$ vanishes.

\begin{definition}\label{D:Sp}
For $\Omega \subset \mathbb{R}^{n}$, the $p$-adapted square function of 
$u\in W^{1,2}_{loc}(\Omega; {\BBC})$ at $Q\in\partial\Omega$ relative 
to the cone $\Gamma_{a}(Q)$ is defined by
\begin{equation}\label{yrddp}
S_{p,a}(u)(Q):=\left(\int_{\Gamma_{a}(Q)}|\nabla u(x)|^{2} |u(x)|^{p-2}\delta(x)^{2-n}\,dx\right)^{1/2}
\end{equation}
and, for each $h>0$, its truncated version is given by 
\begin{equation}\label{yrddp.2}
S_{p,a}^{h}(u)(Q):=\left(\int_{\Gamma_{a}^{h}(Q)}|\nabla u(x)|^{2}|u(x)|^{p-2}\delta(x)^{2-n}\,dx\right)^{1/2}.
\end{equation}
\end{definition}

It is not immediately clear that the integrals appearing in \eqref{yrddp} are well-defined. However, in
\cite{DPcplx}, it was shown that the expressions of the form $|\nabla u(x)|^{2} |u(x)|^{p-2}$, when $u$ is a solution of $\mathcal Lu=0$, are locally integrable and hence the definition of $S_p(u)$ makes sense for such $p$ whenever $p$-ellipticity holds.

A simple application of Fubini's theorem gives 
\begin{equation}\label{SSS-2}
\|S_{p,a}(u)\|^{p}_{L^{p}(\partial\Omega)}\approx\int_{\Omega}|\nabla u(x)|^{2}|u(x)|^{p-2}\delta(x)\,dx.
\end{equation}

\begin{definition}\label{D:NT} 
For $\Omega\subset\mathbb{R}^{n}$ as above, and for 
a continuous $u: \Omega \rightarrow \mathbb C$, the nontangential maximal function ($h$-truncated nontangential maximal function) of $u$
 at $Q\in\partial\Omega$ relative to the cone $\Gamma_{a}(Q)$,
is defined by
\begin{equation}\label{SSS-2a}
N_{a}(u)(Q):=\sup_{x\in\Gamma_{a}(Q)}|u(x)|\,\,\text{ and }\,\,
N^h_{a}(u)(Q):=\sup_{x\in\Gamma^h_{a}(Q)}|u(x)|.
\end{equation}
Moreover, we shall also consider a related version  of the above nontangential maximal function.
This is denoted by $\tilde{N}_{p,a}$ and is defined using $L^p$ averages over balls in the domain $\Omega$. 
Specifically, given $u\in L^p_{loc}(\Omega;{\BBC})$ we set
\begin{equation}\label{SSS-3}
\tilde{N}_{p,a}(u)(Q):=\sup_{x\in\Gamma_{a}(Q)}w(x)\,\,\text{ and }\,\,
\tilde{N}_{p,a}^{h}(u)(Q):=\sup_{x\in\Gamma_{a}^{h}(Q)}w(x)
\end{equation}
for each $Q\in\partial\Omega$ and $h>0$ where, at each $x\in\Omega$, 
\begin{equation}\label{w}
w(x):=\left(\dint_{B_{\delta(x)/2}(x)}|u(z)|^{p}\,dz\right)^{1/p}.
\end{equation}
\end{definition}

Above and elsewhere, a barred integral indicates an averaging operation. Observe that, given $u\in L^p_{loc}(\Omega;{\BBC})$, the function $w$ 
associated with $u$ as in \eqref{w} is continuous and $\tilde{N}_{p,a}(u)=N_a(w)$ 
everywhere on $\partial\Omega$.

The $L^2$-averaged nontangential maximal function was introduced in \cite{KP2} in connection with
the Neuman and regularity problem value problems. In the context of $p$-ellipticity, Proposition 3.5 of \cite{DPcplx} shows that there is no difference between 
$L^2$ averages and $L^p$ averages and that $\tilde{N}_{p,a}(u)$ and $\tilde{N}_{2,a'}(u)$ are comparable in $L^r$ norms for all $r>0$ and all allowable apertures $a,a'$.

\subsection{Carleson measures}
\label{SS:Car} 

We begin by recalling the definition of a Carleson measure in a domain $\Omega$ as in \eqref{Omega-111}. 
For $P\in{\BBR}^n$, define the ball centered at $P$ with the radius $r>0$ as
\begin{equation}\label{Ball-1}
B_{r}(P):=\{x\in{\BBR}^n:\,|x-P|<r\}.
\end{equation}
Next, given $Q \in \partial\Omega$, by $\Delta=\Delta_{r}(Q)$ we denote the surface ball  
$\partial\Omega\cap B_{r}(Q)$. The Carleson region $T(\Delta_r)$ is then defined by
\begin{equation}\label{tent-1}
T(\Delta_{r}):=\Omega\cap B_{r}(Q).
\end{equation}

\begin{definition}\label{Carleson}
A Borel measure $\mu$ in $\Omega$ is said to be Carleson if there exists a constant $C\in(0,\infty)$ 
such that for all $Q\in\partial\Omega$ and $r>0$
\begin{equation}\label{CMC-1}
\mu\left(T(\Delta_{r})\right)\leq C\sigma(\Delta_{r}),
\end{equation}
where $\sigma$ is the surface measure on $\partial\Omega$. 
The best possible constant $C$ in the above estimate is called the Carleson norm 
and is denoted by $\|\mu\|_{\mathcal C}$.
\end{definition}

In all that follows we now assume that the coefficients of the matrix $A$ and $B$ of the elliptic operator $\mathcal{L}=\mbox{div} A(x)\nabla +B(x)\cdot\nabla$  
satisfies the following natural conditions.  
First, we assume that the entries $A_{ij}$ of $A$ are in ${\rm Lip}_{loc}(\Omega)$ and the entries of $B$ are $L^\infty_{loc}(\Omega)$. 
Second, we assume that
\begin{equation}\label{CarA}
d\mu(x)=\sup_{B_{\delta(x)/2}(x)}[|\nabla A|^2+|B|^2]\delta(x) \,dx
\end{equation}
is a Carleson measure in $\Omega$. Sometimes, and for certain coefficients of $A$, we will
assume that their Carleson norm $\|\mu\|_{\mathcal{C}}$ is sufficiently small. 
The fact that $\mu$ is a Carleson allows one to relate integrals in $\Omega$ with respect to $\mu$ to boundary integrals involving 
the nontangential maximal function.
We will often use the following result for our averaged nontangential maximal function, which is Theorem 3.7 of \cite{DPcplx}.

\begin{theorem}\label{T:Car}
Suppose that $d\nu=f\,dx$ and $d\mu(x)=\left[\sup_{B_{\delta(x)/2}(x) }|f|\right]dx$. Assume that
$\mu$ is a Carleson measure. Then there exists a finite 
constant $C=C(L,a)>0$ such that for every $u\in L^{p}_{loc}(\Omega;{\BBC})$ one has
\begin{equation}\label{Ca-222}
\int_{\Omega}|u(x)|^p\,d\nu(x)\leq C\|\mu\|_{\mathcal{C}} 
\int_{\partial\Omega}\left(\tilde{N}_{p,a}(u)\right)^p\,d\sigma.
\end{equation}
Furthermore, consider $\Omega={\mathbb R}^n_+$ where  $\mu$ and $\nu$ are measures as above supported in $\Omega$ and $\delta(x_0,x')=x_0$. Let
 $h:{\mathbb R}^{n-1}\to {\mathbb R}^+$ be a Lipschitz function with Lipschitz norm $L$
and 
$$\Omega_h=\{(x_0,x'):x_0>h(x')\}.$$
Then for any $\Delta\subset {\mathbb R}^{n-1}$ with $\sup_{\Delta} h\le \mbox{diam}(\Delta)/2$ we have
\begin{equation}\label{Ca-222-x}
\int_{\Omega_h\cap T(\Delta)}|u(x)|^p\,d\nu(x)\leq C\|\mu\|_{\mathcal{C}} 
\int_{\partial\Omega_h\cap T(\Delta)}\left(\tilde{N}_{p,a,h}(u)\right)^p\,d\sigma.
\end{equation}
Here for a point $Q=(h(x'),x')\in\partial\Omega_h$ we define
\begin{equation}
\tilde{N}_{p,a,h}(u)(Q) = \sup_{\Gamma_a(Q)}w,\label{eq-Nh}
\end{equation}
where
\begin{equation}\label{TFC-6x}
\Gamma_{a}(Q)=\Gamma_{a}((h(x'),x'))=\{y=(y_0,y')\in\Omega:\,a|h(x')-y_0|>|x'-y'|\}
\end{equation}
and the $L^p$ averages $w$ are defined by \eqref{w} where the distance $\delta$ is taken with respect to the domain $\Omega={\mathbb R}^n_+$.
\end{theorem}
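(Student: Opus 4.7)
The strategy I would pursue is the classical good-$\lambda$/tent decomposition, which uses no PDE information about $u$ and applies to any $u\in L^{p}_{\mathrm{loc}}(\Omega;{\BBC})$. The plan is first to reduce the bound on $\int|u|^{p}\,d\nu$ to a bound on $\int w^{p}\,d\mu$, with $w$ the $L^{p}$-average in \eqref{w}, and then to deduce the latter from a level-set inclusion that uses only the identity $\tilde N_{p,a}(u)=N_{a}(w)$ together with the Carleson condition on $\mu$.

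For the first reduction I would fix a Whitney-type decomposition $\{Q_{k}\}$ of $\Omega$ refined so that $\operatorname{diam}(Q_{k})\le \delta(y)/8$ for every $y\in Q_{k}$. With $x_{k}$ the center of $Q_{k}$, two elementary inclusions follow. First, $Q_{k}\subset B_{\delta(y)/2}(y)$ for each $y\in Q_{k}$, so $\sup_{Q_{k}}|f|\le \sup_{B_{\delta(y)/2}(y)}|f|$ pointwise on $Q_{k}$, and hence $\sup_{Q_{k}}|f|\cdot|Q_{k}|\le \mu(Q_{k})$. Second, $Q_{k}\subset B_{\delta(x_{k})/2}(x_{k})$, which yields $\int_{Q_{k}}|u|^{p}\,dx\lesssim |Q_{k}|\,w(x_{k})^{p}$ directly from the definition of $w$. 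Multiplying these and summing over $k$ produces $\int_{\Omega}|u|^{p}\,d\nu\lesssim \sum_{k}w(x_{k})^{p}\mu(Q_{k})$; since the $L^{p}$-averages $w(x)$ and $w(x_{k})$ are comparable for $x\in Q_{k}$ (the balls $B_{\delta(x)/2}(x)$ have comparable radii and large overlap as $x$ ranges over $Q_{k}$), this sum is in turn controlled by $C\int_{\Omega}w^{p}\,d\mu$.

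The second step is the level-set argument. Whenever $w(x)>\lambda$, every $Q\in\partial\Omega$ with $x\in\Gamma_{a}(Q)$ satisfies $\tilde N_{p,a}(u)(Q)\ge w(x)>\lambda$, and the set of such $Q$ is a surface ball of radius $\sim \delta(x)$. Hence $\{w>\lambda\}$ lies inside the Carleson tent over $\{\tilde N_{p,a}(u)>\lambda\}$, and Definition \ref{Carleson} gives $\mu(\{w>\lambda\})\le C\|\mu\|_{\mathcal C}\,\sigma(\{\tilde N_{p,a}(u)>\lambda\})$. The layer-cake formula then yields $\int_{\Omega}w^{p}\,d\mu\le C\|\mu\|_{\mathcal C}\int_{\partial\Omega}\tilde N_{p,a}(u)^{p}\,d\sigma$, which combined with the previous paragraph proves \eqref{Ca-222}.

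For the localized estimate \eqref{Ca-222-x} I would repeat the argument inside $\Omega_{h}\cap T(\Delta)$, with a Whitney decomposition adapted to this Lipschitz subdomain. The hypothesis $\sup_{\Delta}h\le \operatorname{diam}(\Delta)/2$ guarantees that for $x\in \Omega_{h}\cap T(\Delta)$ the distance to $\partial\Omega_{h}$ is comparable to the ambient $\delta(x)$ and that the nearest-point projection of $x$ onto $\partial\Omega_{h}$ lies in $\partial\Omega_{h}\cap T(\Delta)$; consequently the level set of $w$ in $\Omega_{h}\cap T(\Delta)$ is captured by the tent over $\{\tilde N_{p,a,h}(u)>\lambda\}$ up to constants depending on $L$ and $a$. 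The step I expect to require the most care is precisely this geometric bookkeeping: verifying that the cones $\Gamma_{a}(Q)$ issuing from the Lipschitz graph $\partial\Omega_{h}$ cover the truncated level set with uniform overlap, and that each Carleson box of scale $\sim\delta(x)$ so produced is one to which the ambient Carleson condition on $\mu$ (formulated relative to $\partial\Omega=\BBR^{n-1}$) can be applied. Once this geometric setup is in place, the layer-cake step is identical to that of the first estimate.
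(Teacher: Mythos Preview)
The paper does not give its own proof of this statement; it is quoted verbatim as Theorem~3.7 of \cite{DPcplx} and used as a black box. So there is no in-paper argument to compare against, and I can only comment on the soundness of your outline.

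Your two-step plan---reduce $\int_\Omega|u|^p\,d\nu$ to $\int_\Omega w^p\,d\mu$, then run the tent/layer-cake argument using the identity $\tilde N_{p,a}(u)=N_a(w)$---is the standard route, and step~2 is correct as written. The gap is in step~1. The sentence ``the $L^p$-averages $w(x)$ and $w(x_k)$ are comparable for $x\in Q_k$ (the balls $B_{\delta(x)/2}(x)$ have comparable radii and large overlap)'' is false in general: two balls of comparable radius with $99\%$ overlap can carry arbitrarily different $L^p$-averages of $|u|$ (put all the mass of $|u|^p$ in the thin symmetric difference). Since $\mu$ may also concentrate where $w$ happens to be small, you cannot pass from $\sum_k w(x_k)^p\mu(Q_k)$ to $\int_\Omega w^p\,d\mu$ by this pointwise comparison.

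The inequality you want in step~1 is nevertheless true, and the clean way to get it is a Fubini averaging that avoids Whitney cubes entirely. For $y\in\Omega$ set $E(y)=\{x\in\Omega:y\in B_{\delta(x)/2}(x)\}$; one checks $|E(y)|\sim\delta(y)^n$ and $\delta(x)\sim\delta(y)$ for $x\in E(y)$. Since $f(y)\le\sup_{B_{\delta(x)/2}(x)}|f|$ for every $x\in E(y)$, one has $f(y)\le\dint_{E(y)}\bigl[\sup_{B_{\delta(x)/2}(x)}|f|\bigr]\,dx$. Multiply by $|u(y)|^p$, integrate in $y$, swap the order of integration, and use $\{y:x\in E(y)\}=B_{\delta(x)/2}(x)$ to obtain
\[
\int_\Omega |u|^p\,d\nu \;\lesssim\; \int_\Omega\Bigl[\sup_{B_{\delta(x)/2}(x)}|f|\Bigr]\,\dint_{B_{\delta(x)/2}(x)}|u|^p\,dy\,dx \;=\;\int_\Omega w^p\,d\mu.
\]
With this correction your argument for \eqref{Ca-222} goes through, and the localized estimate \eqref{Ca-222-x} then follows by the geometric bookkeeping you describe.
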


\subsection{Pullback Transformation}
\label{SS:PT}
The Carleson measure conditions on the coefficients of  $\mathcal L$ given in \eqref{CarA}
are compatible with a useful change of variables described in this subsection. 

For a domain $\Omega$ as in \eqref{Omega-111}, consider the mapping 
$\rho:\mathbb{R}^{n}_{+}\to\Omega$ appearing in works of Dahlberg, Ne\v{c}as, 
Kenig-Stein and others, defined by
\begin{equation}\label{E:rho}
\rho(x_0, x'):=\big(x_0+P_{\gamma x_0}\ast\phi(x'),x'\big),
\qquad\forall\,(x_0,x')\in\mathbb{R}^{n}_{+},
\end{equation}
for some positive constant $\gamma$. Here $P$ is a nonnegative function 
$P\in C_{0}^{\infty}(\mathbb{R}^{n-1})$ and, for each $\lambda>0$,  
\begin{equation}\label{PPP-1a}
P_{\lambda}(x'):=\lambda^{-n+1}P(x'/\lambda),\qquad\forall\,x'\in{\mathbb{R}}^{n-1}.
\end{equation}
Finally, $P_{\lambda}\ast\phi(x')$ is the convolution
\begin{equation}\label{PPP-lambda}
P_{\lambda}\ast\phi(x'):=\int_{\mathbb{R}^{n-1}}P_{\lambda}(x'-y')\phi(y')\,dy'. 
\end{equation}
Observe that $\rho$ extends up to the boundary of ${\BBR}^{n}_{+}$ and maps one-to-one from 
$\partial {\BBR}^{n}_{+}$ onto $\partial\Omega$. Also for sufficiently small $\gamma\lesssim L$ 
the map $\rho$ is a bijection from $\overline{\mathbb{R}^{n}_{+} }$ onto $\overline\Omega$ 
and, hence, invertible. 

For a solution $u\in W^{1,2}_{loc}(\Omega;\BBC)$ to $\mathcal{L}u=0$ in $\Omega$ with Dirichlet 
datum $f$, consider $v:=u\circ\rho$ and $\tilde{f}:=f\circ\rho$. The change of variables 
via the map $\rho$ just described implies that $v\in W^{1,2}_{loc}(\mathbb{R}^{n}_{+};{\BBC})$ 
solves a new elliptic PDE of the form
\begin{equation}\label{ESvvv}
0=\mbox{div} (\tilde{A}(x)\nabla v)+\tilde{B}(x)\cdot\nabla v,
\end{equation}
with boundary datum $\tilde{f}$ on $\partial \mathbb{R}^{n}_{+}$. Hence, solving a boundary value 
problem for $u$ in $\Omega$ is equivalent to solving a related boundary value problem for $v$ in 
$\mathbb{R}^{n}_{+}$. Crucially, if the coefficients of the original system are such that \eqref{CarA} 
is a Carleson measure, then the coefficients of $\tilde{A}$ and $\tilde{B}$ satisfy an analogous 
Carleson condition in the upper-half space. If, in addition, the Carleson norm of \eqref{CarA} 
is small and $L$ (the Lipschitz constant for the domain $\Omega$) is also small, then the Carleson 
norm for the new coefficients $\tilde{A}$ and $\tilde{B}$ will be correspondingly small. Moreover, this transformation preserves $p$-ellipticity. Hence the 
map $\rho$ allows us to assume that the
domain is $\Omega={\BBR}^n_+$. 

\subsection{The $L^p$-Dirichlet problem}

We recall the definition of $L^p$ solvability of the Dirichlet problem. 
When an operator $\mathcal L$ is as in Theorem \ref{S3:T1} is uniformly elliptic (i.e. $2$-elliptic)
the Lax-Milgram lemma can be applied and guarantees the existence of weak solutions.
That is, 
given any $f\in \dot{B}^{2,2}_{1/2}(\partial\Omega;{\BBC})$, the homogenous space of traces of functions in $\dot{W}^{1,2}(\Omega;{\BBC})$, there exists a unique (up to a constant)
$u\in \dot{W}^{1,2}(\Omega;{\BBC})$ such that $\mathcal{L}u=0$ in $\Omega$ and ${\rm Tr}\,u=f$ on $\partial\Omega$. We call these solutions \lq\lq  energy solutions" and use them to define the notion of solvability of the $L^p$ Dirichlet problem.

\begin{definition}\label{D:Dirichlet} 
Let $\Omega$ be the Lipschitz domain introduced in \eqref{Omega-111} and fix an integrability exponent 
$p\in(1,\infty)$. Also, fix an aperture parameter $a>0$. Consider the following Dirichlet problem 
for a complex valued function $u:\Omega\to{\BBC}$:
\begin{equation}\label{E:D}
\begin{cases}
0=\partial_{i}\left(A_{ij}(x)\partial_{j}u\right) 
+B_{i}(x)\partial_{i}u 
& \text{in } \Omega,
\\[4pt]
u(x)=f(x) & \text{ for $\sigma$-a.e. }\,x\in\partial\Omega, 
\\[4pt]
\tilde{N}_{2,a}(u) \in L^{p}(\partial \Omega), &
\end{cases}
\end{equation}
where the usual Einstein summation convention over repeated indices ($i,j$ in this case) 
is employed. 

We say the Dirichlet problem \eqref{E:D} is solvable for a given $p\in(1,\infty)$ if  there exists a
$C=C(p,\Omega)>0$ such that
for all boundary data
$f\in L^p(\partial\Omega;{\BBC})\cap B^{2,2}_{1/2}(\partial\Omega;{\BBC})$ the unique energy solution
satisfies the estimate
\begin{equation}\label{y7tGV}
\|\tilde{N}_{2,a} (u)\|_{L^{p}(\partial\Omega)}\leq C\|f\|_{L^{p}(\partial\Omega;{\BBC})}.
\end{equation}
\end{definition}

\noindent{\it Remark.}  Given $f\in\dot{B}^{2,2}_{1/2}(\partial\Omega;{\BBC})\cap L^p(\partial\Omega;{\BBC})$
the corresponding energy solution constructed above is unique (since the decay implied by the $L^p$ estimates eliminates constant solutions). As the space 
$\dot{B}^{2,2}_{1/2}(\partial\Omega;{\BBC})\cap L^p(\partial\Omega;{\BBC})$ is dense in 
$L^p(\partial\Omega;{\BBC})$ for each $p\in(1,\infty)$, it follows that there exists a 
unique continuous extension of the solution operator
$f\mapsto u$
to the whole space $L^p(\partial\Omega;{\BBC})$, with $u$ such that $\tilde{N}_{2,a} (u)\in L^p(\partial\Omega)$ 
and the accompanying estimate $\|\tilde{N}_{2,a} (u) \|_{L^{p}(\partial \Omega)} 
\leq C\|f\|_{L^{p}(\partial\Omega;{\BBC})}$ being valid. Furthermore, as shown in \cite{DPcplx} for any $f\in L^p(\partial \Omega;\mathbb C)$ the corresponding solution $u$ constructed by the continuous extension attains the datum $f$ as its boundary values in the following sense.
Consider the average $\tilde u:\Omega\to \mathbb C$ defined by
$$\tilde{u}(x)=\dint_{B_{\delta(x)/2}(x)} u(y)\,dy,\quad \forall x\in \Omega.$$
Then 
\begin{equation}
f(Q)=\lim_{x\to Q,\,x\in\Gamma(Q)}\tilde u(x),\qquad\text{for a.e. }Q\in\partial\Omega,
\end{equation}
where the a.e. convergence is taken with respect to the ${\mathcal H}^{n-1}$ Hausdorff measure on $\partial\Omega$.

As we introduced in \cite{DPcplx}, the solutions to the Dirichlet problem in the infinite domain $\BBR^n_+$ will be obtained as a limit of solutions in
infinite strips $\Omega^h=\{x=(x_0, x') \in \BBR \times {\BBR}^{n-1}: 0 < x_0 < h\}$. These are defined as follows.

\begin{definition}\label{D:DirichletStrip} 
Let $\Omega = {\BBR}^n_+$, and let $\Omega^h$ be the infinite strip 
$$\Omega^h=\{x=(x_0, x') \in \BBR \times {\BBR}^{n-1}: 0 < x_0 < h\},$$ and let
$p\in(1,\infty)$. Also, fix an aperture parameter $a>0$. 
Let $u$ be a complex valued function $u:\Omega\to{\BBC}$ such that

\begin{equation}\label{E:D-strip}
\begin{cases}
0=\partial_{i}\left(A_{ij}(x)\partial_{j}u\right) 
+B_{i}(x)\partial_{i}u 
& \text{in } \Omega^h,
\\[4pt]
u(x_0,x') = 0, & \text{for all }x_0 \geq h,
\\[4pt]
u(x)=f(x) & \text{ for $\sigma$-a.e. }\,x\in\partial\Omega, 
\\[4pt]
\tilde{N}_{2,a}(u) \in L^{p}(\partial \Omega), &
\end{cases}
\end{equation}
where the usual Einstein summation convention over repeated indices ($i,j$ in this case) 
is employed. 

We say the Dirichlet problem \eqref{E:D-strip} is solvable for a given $p\in(1,\infty)$ if  there exists a
$C=C(p,\Omega)>0$ such that
for all boundary data
$f\in L^p(\partial\Omega;{\BBC})\cap B^{2,2}_{1/2}(\partial\Omega;{\BBC})$ we have that $u\big|_{\Omega^h}$ is the unique ``energy solution" to
\begin{equation}\label{E:D-energy}
\begin{cases}
0=\partial_{i}\left(A_{ij}(x)\partial_{j}u\right) 
+B_{i}(x)\partial_{i}u 
& \text{in } \Omega^h,
\\[4pt]
u(x_0,x') = 0, & \text{for } x_0=h
\\[4pt]
u(x)=f(x) & \text{ for $\sigma$-a.e. }\,x\in\partial\Omega, 
\end{cases}
\end{equation}

and satisfies the estimate
\begin{equation}\label{y7tGV-2}
\|\tilde{N}_{2,a} (u)\|_{L^{p}(\partial\Omega)}\leq C\|f\|_{L^{p}(\partial\Omega;{\BBC})}.
\end{equation}
\end{definition}

\vskip2mm


\section{Estimates for a $p$-adapted square function $S_p(u)$}
\label{S4}

In this section we establish some relationships between square functions and nontangential maximal functions that are
key to the proof of Theorem \ref{MainPerturb}. Let $\mathcal L_0$ and $\mathcal L_1$ satisfy the hypotheses of Theorem \ref{MainPerturb}; that is,
$\mathcal L_1$ is the perturbation of an operator $\mathcal L_0$ whose coefficients satisfy a Carleson measure condition.

We fix an $h>1$, and an infinite strip $\Omega^h$ defined in the previous section.
Let us assume that
$u$ is an energy solution to \eqref{E:D-energy}  in the strip for the operator $\mathcal L_1$ and extended to be zero above 
height $h$.  
In this section we establish an estimate of the $p$-adapted square function of $u$ in terms of boundary data and 
its nontangential maximal function. The constants appearing in the estimate will be independent of the height $h$.

\begin{lemma}\label{S3:L4}
Let $\Omega=\BBR^n_+$ and $\mathcal{L}_0$ and $\mathcal{L}_1$ are as in Theorem \ref{MainPerturb}.
Let $u:\Omega \to  \BBC$ be as above, 
with the Dirichlet boundary datum $f\in \dot{B}^{2,2}_{1/2}(\partial\Omega;{\BBC}) \cap  L^{p}(\partial \Omega;{\BBC})$.
Then there exists $K=K(\lambda_p,\Lambda,n,p)>0$ such that if 
$$\|m\|_{\mathcal C}<K$$
then for all $r>0$
\begin{align}\label{S3:L4:E00}
&
p\frac{\lambda_p}4\iint_{[0,r/2]\times\partial\Omega}|u|^{p-2}|\nabla u|^{2}x_0\,dx'\,d x_0 
+\frac{2}{r}\iint_{[0,r]\times\partial \Omega} |u(x_0,x')|^{p}\,dx'\,dx_0 
\nonumber\\[4pt]
&
\leq\int_{\partial\Omega}|u(0,x')|^{p}\,dx' 
+\int_{\partial\Omega}|u(r,x')|^{p}\,dx'
+C(\|\mu'\|_{\mathcal{C}}+\|m\|^{1/2}_{\mathcal{C}})\int_{\partial\Omega}\left[\tilde{N}^{r}_{p,a}(u)\right]^{p}\,dx'.
\end{align}
Here the constant on the righthand side $C=C(\lambda_p,\Lambda,n,p)>0$ is independent of $r$.
\end{lemma}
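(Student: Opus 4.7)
The plan is to test the weak formulation of $\mathcal{L}_1 u = 0$ against the $p$-adapted function $\varphi = \bar u |u|^{p-2} \eta(x_0)$, where $\eta$ is a piecewise-Lipschitz cutoff supported in $[0, r]$ satisfying $\eta(0)=0$ and $\eta(x_0)=x_0$ on $[0, r/2]$. I would design $\eta$ on $[r/2, r]$ so that, upon one integration by parts in $x_0$, its distributional second derivative $\eta''$ produces Dirac masses at $x_0=0$ and $x_0=r$ (yielding the slice terms $\int|u(0,\cdot)|^p$ and $\int|u(r,\cdot)|^p$) together with a negative constant bulk piece of size comparable to $-1/r$ on a subinterval of $[r/2,r]$ (yielding the volumetric average $\frac{2}{r}\iint|u|^p$). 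An explicit choice: glue $\eta=x_0$ on $[0,r/2]$ with a quadratic on $[r/2,r]$ vanishing at $r$ and matching $C^1$ at $r/2$. Since $\eta(0)=0$ and $u\equiv 0$ above $x_0=h$, the weak formulation then reduces to
\[
\int_\Omega A^1\nabla u\cdot\overline{\nabla\varphi}\,dx=\int_\Omega B^1\cdot\nabla u\,\bar\varphi\,dx.
\]

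Expanding $\overline{\nabla\varphi}=\eta\,\nabla(u|u|^{p-2})+u|u|^{p-2}\eta'(x_0)e_0$ decomposes the left side into a coercive term and an $x_0$-drift. I would split $A^1=A^0+(A^1-A^0)$ (and similarly $B^1=B^0+(B^1-B^0)$). For the $A^0$-piece of the coercive term, the $p$-ellipticity of $A^0$ combined with the Carbonaro--Dragi\v cevi\'c identity yields, upon taking real parts,
\[
\mathscr{R}e \int \eta A^0\nabla u\cdot \nabla(\bar u|u|^{p-2})\,dx\geq \frac{p\lambda_p}{2}\int \eta |\nabla u|^2|u|^{p-2}\,dx,
\]
whose restriction to $[0,r/2]$ (where $\eta=x_0$) supplies the leading square-function term on the LHS of the claim. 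For the $A^0$-piece of the drift, the canonical form conditions $A^0_{00}=1$ and $\mathscr{I}m A^0_{0j}=0$ for $1\le j\le n-1$ reduce the real part to $\frac{1}{p}\int \eta'(x_0) A^0_{0j}\partial_j(|u|^p)\,dx$. The $j=0$ component, after integration by parts in $x_0$, produces via the design of $\eta$ exactly the slice terms and the volumetric-average term of the claim. The $j\geq 1$ components, integrated by parts in $x_j$, transfer the derivative onto $A^0_{0j}$, yielding an error $\frac{1}{p}\int\eta'\,(\partial_j A^0_{0j})|u|^p\,dx$ controlled by $C\|\mu'\|_{\mathcal C}\int[\tilde N^r_{p,a}(u)]^p\,d\sigma$ through Theorem \ref{T:Car}, since $|\sum_j \partial_j A^0_{0j}|^2$ appears in the density of $d\mu'$.

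The remaining \emph{perturbation} terms — the $(A^1-A^0)$-pieces of the coercive and drift parts, and the $B^0$ and $(B^1-B^0)$ contributions from the right side — are all estimated by Young's inequality. A prototypical case is
\[
|(A^1-A^0)\nabla u \cdot \bar u|u|^{p-2}|\leq \varepsilon\, x_0|\nabla u|^2|u|^{p-2} + C_\varepsilon\,\frac{|A^1-A^0|^2 |u|^{p}}{x_0},
\]
with the $\varepsilon$-term absorbed into the coercive LHS and the second, recognized as a density of $dm$ times $|u|^p$, controlled via Theorem \ref{T:Car} by $C_\varepsilon\|m\|_{\mathcal C}\int[\tilde N^r_{p,a}(u)]^p\,d\sigma$. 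The analogous Young splitting using the $|B|^2\delta$-densities of $d\mu'$ and $dm$ handles the $B^0$ and $B^1-B^0$ contributions. Finally, using the assumed smallness $\|m\|_{\mathcal C}<K\le 1$, we replace $\|m\|_{\mathcal C}$ by $\|m\|^{1/2}_{\mathcal C}$ to fit the form stated in the lemma.

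\textbf{Main obstacle.} The principal technical difficulty is the careful design of $\eta$ and the subsequent analysis of the $j=0$ drift term so that the resulting identity simultaneously exhibits the two boundary slice terms and the volumetric average with the precise constants $1$ and $2/r$. A secondary subtlety is the Young splitting of the coercive perturbation $\int\eta(A^1-A^0)\nabla u\cdot\nabla(\bar u|u|^{p-2})$, whose $|\nabla u|^2$-degree forces one to first apply Cauchy--Schwarz using the $L^\infty$ bound $|A^1-A^0|\leq 2\Lambda$ before the Young splitting, at the cost of a $\Lambda$-dependent absorption constant; the resulting second factor still fits the $|A^1-A^0|^2/\delta$ density of $dm$ when combined with the weight $\eta\leq x_0$.
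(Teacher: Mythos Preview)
Your overall strategy is close to the paper's, but there is a genuine gap in the treatment of the $j\geq 1$ drift terms. After integrating by parts in $x_j$ you claim the resulting term
\[
\frac{1}{p}\int \eta'(x_0)\Big(\textstyle\sum_{j\ge 1}\partial_j A^0_{0j}\Big)\,|u|^p\,dx
\]
is controlled by $C\|\mu'\|_{\mathcal C}\int[\tilde N^r_{p,a}(u)]^p\,d\sigma$ via Theorem~\ref{T:Car}. This is not correct: the density of $d\mu'$ is $\big|\sum_j\partial_j A^0_{0j}\big|^2\delta(x)$, not $\big|\sum_j\partial_j A^0_{0j}\big|$, and on $[0,r/2]$ one has $\eta'\equiv 1$, which supplies no compensating power of $\delta$. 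In general $|\nabla A|\,dx$ is \emph{not} a Carleson measure when only $|\nabla A|^2\delta\,dx$ is (the obvious Cauchy--Schwarz attempt produces the divergent factor $\int_{T(\Delta)}\delta^{-1}\,dx$). So this error term is not bounded as you claim.

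The paper's fix is the key missing idea. Instead of integrating the $j\ge 1$ drift directly in $x_j$, one first reinserts $1=\partial_0 x_0$ into $-p^{-1}\int A^0_{0j}\,\partial_j(|u|^p)\,\zeta$ and integrates by parts in $x_0$. This manufactures the weight $x_0$ and yields terms like $p^{-1}\int(\partial_0 A^0_{0j})\,\partial_j(|u|^p)\,x_0\,\zeta$ and (after a further $x_j$-integration by parts) $-p^{-1}\int(\partial_j A^0_{0j})\,\partial_0(|u|^p)\,x_0\,\zeta$. Each of these factors via Cauchy--Schwarz into a Carleson piece $\big(\int|\partial A^0_{0\cdot}|^2|u|^p x_0\big)^{1/2}$ and the square-function piece $\mathcal I^{1/2}$, which is exactly the structure that produces the $\|\mu'\|_{\mathcal C}$ error term. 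A boundary term at $x_0=r$ also appears; it is this term (and its analogue from $II_{24}$) that forces the final averaging in $r$, which is how the paper actually obtains the $\frac{2}{r}\iint_{[0,r]}|u|^p$ contribution on the left --- not via a clever choice of $\eta$. Incidentally, your piecewise $\eta$ can only produce a volumetric term supported on $[r/2,r]$ (since $\eta''=0$ where $\eta=x_0$), so even the $j=0$ bookkeeping would not reproduce the stated inequality as written.
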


\begin{proof}  We extend the methods of Lemma 5.1 of \cite{DPcplx} to the case of an inhomogeneous equation.
Fix an arbitrary $y'\in\partial\Omega\equiv{\mathbb{R}}^{n-1}$, and consider first the case $r \leq h$. Choose 
a smooth cutoff function $\zeta$ which is $x_0-$independent and satisfies
\begin{equation}\label{cutoff-F}
\zeta= 
\begin{cases}
1 & \text{ in } B_{r}(y'), 
\\
0 & \text{ outside } B_{2r}(y').
\end{cases}
\end{equation}
Moreover, assume that $r|\nabla \zeta| \leq c$ for some positive constant $c$ independent of $y'$. 
Because the coefficients of operator $\mathcal{L}_0$ are differentiable, while the coefficients of $\mathcal{L}_1$ are not, we rewrite the solution of $\mathcal{L}_1u=0$ as follows:
\begin{equation}
\mathcal{L}_0u=\mbox{\rm div} \,\bold{F} + \beta \cdot \nabla u,\label{eq5.11}
\end{equation}
where $\bold{F}_i = \varepsilon_{ij} \partial_j u$ and $\varepsilon_{ij}=A^0_{ij}-A^1_{ij}$ and $\beta_j=B^0_j-B^1_j$.

We begin by considering the integral quantity 
\begin{equation}\label{A00}
\mathcal{I}:=\mathscr{R}e\,\iint_{[0,r]\times B_{2r}(y')}A^0_{ij}\partial_{j}u 
\partial_{i}(|u|^{p-2}\overline{u})x_0\zeta\,dx'\,dx_0
\end{equation}
with the usual summation convention understood. With $\chi=x_0\zeta$ we have, by $p$-ellipticity (c.f. Theorem 2.4 of \cite{DPcplx}),
for some $\lambda_p>0$
\begin{equation}\label{cutoff-AA}
\mathcal{I}\geq{\lambda_p}\iint_{[0,r]\times B_{2r}}|u|^{p-2}|\nabla u|^2 x_0\zeta\,dx'\,dx_0,
\end{equation}
where we use the abbreviation $B_{2r}:=B_{2r}(y')$ whenever convenient.

We integrate by parts the formula for $\mathcal I$ in order 
to relocate the $\partial_i$ derivative. This gives 
\begin{align}\label{I+...+IV}
\mathcal{I}
&= \mathscr{R}e\,\int_{\partial\left[(0,r)\times B_{2r}\right]} 
A^0_{ij}\partial_{j}u|u|^{p-2}\overline{u}x_0\zeta\nu_{x_i}\,d\sigma 
\nonumber\\[4pt]
&\quad - \mathscr{R}e\,\iint_{[0,r]\times B_{2r}}\partial_{i}\left(A^0_{ij} 
\partial_{j}u\right)|u|^{p-2}\overline{u}x_0\zeta\,dx'\,dx_0 
\nonumber\\[4pt]
&\quad - \mathscr{R}e\,\iint_{[0,r]\times B_{2r}}A^0_{ij}\partial_{j}{u}|u|^{p-2}\overline{u}\partial_{i}x_0\zeta\,dx'\,dx_0 
\nonumber\\[4pt]
&\quad - \mathscr{R}e\,\iint_{[0,r]\times B_{2r}}A^0_{ij}\partial_{j}u|u|^{p-2}\overline{u}x_0\partial_{i}\zeta\,dx'\,dx_0
\nonumber\\[4pt]
&=:I+II+III+IV,
\end{align}
where $\nu$ is the outer unit normal vector to the domain $(0,r)\times B_{2r}$. The boundary term $I$ vanishes
except on the set $\{r\}\times B_{2r}$, and only when $i=0$. This gives
\begin{equation}\label{cutoff-BBB}
I=\mathscr{R}e\,\int_{\{r\}\times B_{2r}} 
A^0_{0j}\partial_{j}u|u|^{p-2}\overline{u}x_0\zeta\,d\sigma 
\end{equation}

As $u$ is a weak solution of  \eqref{eq5.11} in $\Omega$, we use the equation to transform $II$ into
\begin{align}\label{cutoff-CCC}
II &=\mathscr{R}e\,\iint_{[0,r]\times B_{2r}}(B^0_{i}-
\beta_i)(\partial_{i}u)|u|^{p-2}\overline{u}x_0\zeta\,dx'\,dx_0
\nonumber\\[4pt]
&\quad  - \mathscr{R}e\,\iint_{[0,r]\times B_{2r}} \partial_{i} \left(\varepsilon_{ij} 
\partial_{j}u \right) |u|^{p-2}\overline{u}x_0\zeta\,dx'\,dx_0 
\nonumber\\[4pt]
&\quad =: II_1 + II_2.
\end{align}

To estimate the first term on the right hand side, we use H\"older's inequality, the Carleson condition of Theorem \ref{S3:T1} for the term $B^0$, the Carleson condition \eqref{eqdm} and 
Theorem~\ref{T:Car} to see that
\begin{align}\label{TWO-ONE}
|II_1| &\leq\left(\iint_{[0,r]\times B_{2r}} (|B^0|^{2}+|\beta|^2) 
|u|^{p} x_0\zeta\,dx'\,dx_0\right)^{1/2}  \times
\nonumber\\[4pt]
&\qquad\left(\iint_{[0,r]\times B_{2r}}|u|^{p-2}|\partial_{j}u|^{2}x_0\zeta\,dx'\,dx_0\right)^{1/2} 
\nonumber\\[4pt]
&\leq C(\lambda_p,\Lambda,N)\left((\|\mu'\|_{\mathcal{C}}+\|m\|_{\mathcal{C}})\int_{B_{2r}} 
\left[\tilde{N}^r_{p,a}(u)\right]^{p}\,dx'\right)^{1/2}\cdot\mathcal{I}^{1/2}. 
\end{align}

We integrate term $II_2$ by parts, obtaining a boundary term when $i=0$:

\begin{align}
II_2 &= - \mathscr{R}e\,\iint_{[0,r]\times B_{2r}} \partial_{i} \left(\varepsilon_{ij} 
\partial_{j}u \right) |u|^{p-2}\overline{u}x_0\zeta\,dx'\,dx_0 
\nonumber\\[4pt]
& =
- \mathscr{R}e\,\iint_{[0,r]\times B_{2r}} \varepsilon_{ij} 
\partial_{j}u   \partial_{i}\left(|u|^{p-2}\overline{u}x_0\zeta\,\right)dx'\,dx_0 
\nonumber\\[4pt]
&+ \mathscr{R}e\,\int_{ B_{2r}} r \varepsilon_{0j} 
\partial_{j}u(x',r) |u(x',r)|^{p-2}\overline{u(x,r)}\zeta\,dx'
\nonumber\\[4pt]
&=- \mathscr{R}e\,\iint_{[0,r]\times B_{2r}} \varepsilon_{ij} 
\partial_{j}u   \partial_{i}\left(|u|^{p-2}\overline{u}\right)x_0\zeta\,dx'\,dx_0 
\nonumber\\[4pt]
&- \mathscr{R}e\,\iint_{[0,r]\times B_{2r}} \varepsilon_{0j} 
\partial_{j}u |u|^{p-2}\overline{u}\zeta\,dx'\,dx_0 
\nonumber\\[4pt]
&- \mathscr{R}e\,\iint_{[0,r]\times B_{2r}} \varepsilon_{ij} 
\partial_{j}u |u|^{p-2}\overline{u}x_0\partial_{i}\zeta\,dx'\,dx_0 
\nonumber\\[4pt]
&+ \mathscr{R}e\,\int_{ B_{2r}} r \varepsilon_{0j} 
\partial_{j}u(x',r) |u(x',r)|^{p-2}\overline{u(x,r)}\zeta\,dx'
\nonumber\\[4pt]
&=II_{21}+II_{22}+II_{23}+II_{24}.
\end{align}

For the term $II_{21}$ we have the estimate
\begin{align}
|II_{21}| &\lesssim 
\iint_{[0,r]\times B_{2r}} |\varepsilon_{ij}| 
|\partial_{j}u|   |\partial_{i}u||u|^{p-2}x_0\zeta\,dx'\,dx_0 
\nonumber\\[4pt]
&
\le \sup_{i,j}\|\varepsilon_{ij}\|_{L^\infty}\,\mathcal{I}\lesssim \|m\|^{1/2}_{\mathcal C}\,\mathcal{I}.
\end{align}
the last estimate is a consequence of \eqref{eqdm}, since the Carleson condition implies $L^\infty$ bounds on $\varepsilon_{ij}$. Similarly, for term $II_{22}$ we have by Cauchy-Schwarz and the Carleson condition \eqref{eqdm}

\begin{align}
|II_{22}| &\lesssim 
\left(\iint_{[0,r]\times B_{2r}}  
|\partial_{j}u|^2   |u|^{p-2}x_0\zeta\,dx'\,dx_0 \right)^{1/2}\left(\iint_{[0,r]\times B_{2r}}  
\frac{|\varepsilon_{ij}|^2}{x_0}   |u|^{p}\zeta\,dx'\,dx_0 \right)^{1/2}.
\nonumber\\[4pt]
&\leq C(\lambda_p,\Lambda,p,n)\left(\|m\|_{\mathcal{C}}\int_{B_{2r}}
\left[\tilde{N}^r_{p,a}(u)\right]^{p}\,dx'\right)^{1/2}\cdot\mathcal{I}^{1/2}. 
\nonumber\\[4pt]
\end{align}
We shall deal with the last two terms $II_{23}$ and $II_{24}$ later.

As $\partial_ix_0=0$ for $i>0$ the term $III$ is non-vanishing only for $i=0$. We further split this term
by separately considering the cases when $j=0$ and $j>0$. This yields, since $A_{00} =1$, 

\begin{align}\label{u6fF}
III_{\{j=0\}} &=-\mathscr{R}e\,\iint_{[0,r]\times B_{2r}}\partial_{0}{u}|u|^{p-2}\overline{u}\zeta\,dx'\,dx_0 \nonumber\\
&=-\frac1p\iint_{[0,r]\times B_{2r}}  \partial_{0}(|u|^{p})\zeta\,dx'\,dx_0 \\
&=-\frac{1}{p}\int_{B_{2r}} |u|^p(r,x')\zeta\,dx' + \frac{1}{p}\int_{B_{2r}} |u|^p(0,x')\zeta\,dx' \nonumber
\end{align}

When $j>0$ we first use the fact that $A_{0j}^0$ is real and hence the expression $\mathscr{R}e\, [{A}_{0j}^0\, (\partial_{j}u)|u|^{p-2}\overline u]=p^{-1}A_{0j}^0\partial_j(|u|^p)$. Then we 
reintroduce $1=\partial_0x_0$  and integrate by parts moving the $\partial_0$ derivative
\[\begin{split}
III_{\{j \neq 0\}}
&= -\mathscr{R}e\, \iint_{[0,r]\times B_{2r}} {A}_{0j}^0\, \partial_{j}u|u|^{p-2} \, \overline{u} \,  \zeta \,dx'\,dx_0 \\
& -p^{-1}\, \iint_{[0,r]\times B_{2r}} {A}^0_{0j}\, \partial_{j}(|u|^{p}) \, \left(\partial_{0} x_0\right) \zeta \,dx'\,dx_0 \\
&= p^{-1}\int_{B_{2r}} {A}^0_{0j} \partial_{j}(|u|^p)(r,x') r \zeta \,dx' + p^{-1}\iint_{[0,r]\times B_{2r}} \partial_{0}{A}_{0j}^0 \partial_{j}(|u|^p) x_0 \zeta \,dx'\,dx_0 \\
&+ p^{-1}\iint_{[0,r]\times B_{2r}} {A}_{0j}^0 \partial^2_{0j}(|u|^p) x_0 \zeta \,dx'\,dx_0  \\
&= III_{1} + III_{2} + III_{3}.
\end{split}\]
We note that $III_{1} = - I_{\{j \neq 0\}}$.

In the third term $III_3$ we switch the order of derivatives $\partial^2_{0j} = \partial^2_{j0}$ and make a further integration by parts with respect to $\partial_j$.
\[\begin{split}
III_{3}
&= - p^{-1}\iint_{[0,r]\times B_{2r}} \partial_{j} {A}_{0j}^0 \partial_{0}(|u|^p) x_0 \zeta \,dx'\,dx_0 \\
&\quad - p^{-1}\iint_{[0,r]\times B_{2r}} {A}_{0j}^0\partial_{0}(|u|^p) x_0 (\partial_{j}\zeta) \,dx'\,dx_0 = III_{31} + III_{32}.
\end{split}\]

The terms $III_2$ and $III_{31}$ are of similar type as $II_{1}$ we have handled earlier and hence have the same estimate
\[
III_{2} + III_{31} \leq C(\lambda_p,\Lambda, p,n) \left( \|\mu'\|_{\mathcal{C}} \int_{ B_{2r}} \left[\tilde{N}^r_{p,a}(u)\right]^{p}\,dx'\right)^{1/2} \cdot \mathcal I^{1/2}
\]

We add up all terms we have so far to obtain
\begin{equation}\label{square01}\begin{split}
\mathcal I
&\leq p^{-1}\int_{B_{2r}}  \partial_{0}(|u|^p)(r,x') r \zeta \,dx' \\
&\quad  - {p}^{-1}\int_{B_{2r}} |u|^p(r,x')\zeta\,dx' + {p}^{-1}\int_{B_{2r}} |u|^p(0,x')\zeta\,dx'  \\
&\quad + C(\lambda_p,\Lambda,p,n) (\|\mu'\|_{\mathcal{C}}+\|m\|_{\mathcal{C}}) \int_{B_{2r}} \left[\tilde{N}^{r}_{p,a}(u)\right]^p(u) \,dx' \\
&\quad+\left(\frac14+C'(\lambda_p,\Lambda,p,n)\|m\|^{1/2}_{\mathcal{C}}\right)\mathcal I \\
&\quad + II_{23}+II_{24}+ III_{32}  + IV.
\end{split}\end{equation}

We have used the arithmetic-geometric inequality for the expression bounding the terms $II_1$, $II_{22}$ as well as for similar terms $III_2$ and $III_{31}$. Observe that if $C'(\lambda_p,\Lambda,p,n)\|m\|^{1/2}<1/4$ the term containing $\mathcal I$ can be absorbed by the lefthand side of \eqref{square01}.

To obtain a global version of \eqref{square01}, consider a sequence of disjoint boundary balls 
$(B_r(y'_k))_{k\in\mathbb N}$ such that $\cup_{k}B_{2r}(y'_k) $ covers $\partial\Omega={\BBR}^{n-1}$ 
and consider a partition of unity $(\zeta_{k})_{k\in\mathbb N}$ subordinate to this cover. That is, 
assume $\sum_k \zeta_{k} = 1$ on ${\BBR}^{n-1}$ and each $\zeta_{k}$ is supported in $B_{2r}(y'_k)$. 
Write $IV_k$ for each term as the last expression in \eqref{I+...+IV} corresponding to 
$B_{2r}=B_{2r}(y'_k)$. Given that $\sum_k \partial_i\zeta_{k} = 0$ for each $i$, by summing 
\eqref{square01} over all $k$'s gives $\sum_{k} IV_k= 0$. The same observation applies to the terms arising in $II_{23}$ and $III_{32}$.
It follows that
\begin{align}\label{square02}
&\hskip -0.20in 
\iint_{[0,r]\times{\BBR}^{n-1}}|\nabla u|^2|u|^{p-2}\,x_0\,dx'\,dx_0 
\nonumber\\[4pt]
\lesssim&\hskip 0.20in
p^{-1}\int_{{\BBR}^{n-1}}  \partial_{0}(|u|^p)(r,x') r  \,dx' \nonumber\\
&\quad  - {p}^{-1}\int_{{\BBR}^{n-1}} |u|^p(r,x')\,dx' + {p}^{-1}\int_{{\BBR}^{n-1}} |u|^p(0,x')\,dx'  
\nonumber\\[4pt]
&\quad +C(\|\mu'\|_{\mathcal{C}}+\|m\|_{\mathcal{C}})\int_{{\BBR}^{n-1}}\left[\tilde{N}^{r}_{p,a}(u)\right]^p\,dx'
\nonumber\\[4pt]
&\quad+ \mathscr{R}e\,\int_{ {\mathbb R}^{n-1}} r \varepsilon_{0j} 
\partial_{j}u(x',r) |u(x',r)|^{p-2}\overline{u(x,r)}\,dx'.
\end{align}

We have established \eqref{square02} for $r \leq h$, but just as in \cite{DPcplx}, section 5, we can
see that \eqref{square02} holds also for $r>h$ since $u = 0$ when $r \geq h$.
Then, \eqref{S3:L4:E00} follows by integrating \eqref{square02} in $r$ over
$[0,r']$ and dividing by $r'$.  The estimate for the last term of \eqref{square02} requires the Carleson condition on the difference of the
the coefficients and we argue as follows. After the integration and averaging we obtain the quantity
\begin{align}\label{square022}
& \left|\mathscr{R}e\,\frac1{r'}\iint_{ {\mathbb R}^{n-1}\times[0,r']}  \varepsilon_{0j} 
\partial_{j}u(x',x_0) |u(x',r)|^{p-2}\overline{u(x,x_0)}x_0\,dx'\,dx_0\right|
\nonumber\\[4pt]
\le&\,\left(\iint_{ {\mathbb R}^{n-1}\times[0,r']}  \frac{|\varepsilon_{0j}|^2}{x_0} |u|^p\,dx'\,dx_0\right)^{1/2}
\left(\iint_{ {\mathbb R}^{n-1}\times[0,r']} |\partial_{j}u|^2 |u|^{p-2}x_0\,dx'\,dx_0\right)^{1/2}
\nonumber\\[4pt]
\le&\,C(\lambda_p,\Lambda, p,n) \left( \|m\|_{\mathcal{C}} \int_{ B_{2r}} \left[\tilde{N}^r_{p,a}(u)\right]^{p}\,dx'\right)^{1/2}
\nonumber\\[4pt]
&\qquad\times\left(\iint_{ {\mathbb R}^{n-1}\times[0,r']} |\partial_{j}u|^2 |u|^{p-2}x_0\,dx'\,dx_0\right)^{1/2}.
\end{align}
The last term of \eqref{square022} can be split as a sum of two terms: an integral over
${\mathbb R}^{n-1}\times[0,r'/2]$ and an integral over ${\mathbb R}^{n-1}\times[r'/2,r']$.
The integral over ${\mathbb R}^{n-1}\times[0,r'/2]$ appears on the righthand side of \eqref{S3:L4:E00} and hence can be absorbed by it. For the second integral we use Theorem 1.1 and Proposition 3.5 from \cite{DPcplx} to obtain
\begin{align}\label{square023}
&\iint_{ {\mathbb R}^{n-1}\times[r'/2,r']} |\partial_{j}u|^2 |u|^{p-2}x_0\,dx'\,dx_0
\lesssim \int_{ {\mathbb R}^{n-1}} \left[\tilde{N}^{r}_{p,a}(u)\right]^{p}\,dx'.
\end{align}
From this the claim follows.
\end{proof}

Lemma~\ref{S3:L4} has two important corollaries. Their proofs are not short, but do not differ in any respect from the corresponding
proofs of Corollaries 5.2 and 5.5 of \cite{DPcplx}, except that certain constants now depend on $\|m\|^{1/2}_{\mathcal{C}}$.

\begin{corollary}\label{S4:C2} 
Under the assumptions of Lemma \ref{S3:L4} we have for such $u$:
\begin{eqnarray}\label{S3:L4:E00bb}
\lambda'_p\iint_{\BBR^n_+}|\nabla u|^{2}|u|^{p-2}x_0\,dx'\,d x_0 
&\leq&\int_{\BBR^{n-1}}|u(0,x')|^{p}\,dx'
+\\\nonumber
&&C(\|\mu'\|_{\mathcal{C}}+\|m\|^{1/2}_{\mathcal{C}})\int_{\BBR^{n-1}}\left[\tilde{N}_{p,a}(u)\right]^{p}\,dx'.
\end{eqnarray}

Furthermore, under the same assumptions, if $g:{\mathbb R}^{n-1}\to{\mathbb R}^+$ is a Lipschitz function with small Lipschitz norm for any $\Delta\subset{\mathbb R}^{n-1}$ such that $\sup_\Delta g\le d/2$ where $d=\mbox{diam}(\Delta)$ we also have the following local estimate

\begin{eqnarray}\label{eq5.15}
\iint_{\Omega_g\cap T(\Delta)}|\nabla u|^{2}|u|^{p-2}\delta_g(x)\,dx 
&\leq &C\int_{2\Delta}|u(g(x'),x')|^p+\\\nonumber 
&&C(1+\|\mu\|_{\mathcal{C}} + \|m\|^{1/2}_{\mathcal{C}} ) \int_{2\Delta}\left[\tilde{N}^{2d}_{p,a,g}(u)\right]^{p}\,dx'.
\end{eqnarray}
Here $\tilde{N}^{2d}_{p,a,g}$ is the truncated version of the nontangential maximal function defined in \eqref{eq-Nh}
with respect to the domain $\Omega_g=\{x_0>g(x')\}$ and $\delta_g$ measures the distance of a point to the boundary of $\Omega_g$. 
\end{corollary}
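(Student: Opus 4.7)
The plan is to deduce both estimates directly from Lemma \ref{S3:L4}, closely following the arguments in Corollaries 5.2 and 5.5 of \cite{DPcplx}; the only change is that a few new perturbation terms, all of size $\|m\|^{1/2}_{\mathcal C}$ or $\|m\|_{\mathcal C}$, need to be tracked and absorbed, but these are already present on the right-hand side of \eqref{S3:L4:E00}.

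For the global estimate \eqref{S3:L4:E00bb}, one works on the strip $\Omega^h$ where $u$ extends by zero above height $h$. Choosing any $r\geq 2h$ in \eqref{S3:L4:E00} makes the term $\int_{\partial\Omega}|u(r,x')|^{p}\,dx'$ vanish and upgrades $\tilde{N}^{r}_{p,a}(u)$ to the full $\tilde{N}_{p,a}(u)$, while the integration domain $[0,r/2]\times\partial\Omega$ on the left already contains the entire support of $u$. Setting $\lambda'_p=p\lambda_p/4$ yields the stated bound uniformly in $h$; the statement on all of $\BBR^{n}_+$ then follows by the same limiting procedure $h\to\infty$ used throughout \cite{DPcplx}, via Fatou on the left and the uniform-in-$h$ control on $\tilde{N}_{p,a}(u)$ established there.

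For the local estimate \eqref{eq5.15}, I would run the integration-by-parts scheme of Lemma \ref{S3:L4} localized to $\Omega_g\cap T(2\Delta)$. Take a smooth cutoff $\zeta$ equal to one on $T(\Delta)\cap\Omega_g$ and supported in $T(2\Delta)\cap\{\delta_g\leq 2d\}$ with $|\nabla\zeta|\lesssim 1/d$, and replace the weight $x_0$ by the distance $\delta_g(x)$ to $\partial\Omega_g$. Form the analog of \eqref{A00}:
\[
\mathcal I:=\mathscr{R}e\iint_{\Omega_g\cap T(2\Delta)} A^{0}_{ij}\partial_j u\,\partial_i\bigl(|u|^{p-2}\overline u\bigr)\delta_g\zeta\,dx,
\]
bound it below by $\lambda_p\iint |\nabla u|^{2}|u|^{p-2}\delta_g\zeta\,dx$ via $p$-ellipticity, and integrate by parts to recover the identity $\mathcal I=I+II+III+IV$. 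Boundary term $I$ now produces the $\int_{2\Delta}|u(g(x'),x')|^{p}\,dx'$ contribution on the piece of $\partial(\Omega_g\cap T(2\Delta))$ where $\delta_g$ does not vanish; the splitting $II=II_1+II_{21}+II_{22}+II_{23}+II_{24}$ and the decomposition of $III$ into $j=0$ and $j\neq 0$ sub-cases proceed verbatim, with each absorption term controlled via the local Carleson estimate \eqref{Ca-222-x} by $\|\mu\|_{\mathcal C}$ or $\|m\|^{1/2}_{\mathcal C}$ times $\int_{2\Delta}\bigl[\tilde{N}^{2d}_{p,a,g}(u)\bigr]^{p}\,dx'$.

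The main obstacle, which was absent in the global case, is handling the cutoff-derivative terms $IV$, $II_{23}$, $III_{32}$: in Lemma \ref{S3:L4} these cancelled after summing a partition of unity, whereas now they must be estimated individually. On $\mathrm{supp}\,\nabla\zeta$ one has $\delta_g(x)\sim d$ and $|\nabla\zeta|\lesssim 1/d$, so the Cauchy-Schwarz argument of \eqref{TWO-ONE} bounds each such term by $\mathcal I^{1/2}\cdot\bigl(d^{-1}\iint |u|^{p}|\nabla\zeta|^{2}\delta_g\,dx\bigr)^{1/2}$, and the second factor is controlled by $\int_{2\Delta}[\tilde{N}^{2d}_{p,a,g}(u)]^{p}\,dx'$ directly from the definition of the averaged nontangential maximal function applied over Whitney-size balls in the support of $\nabla\zeta$. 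The $\mathcal I^{1/2}$ factor is then absorbed into the left-hand side through AM-GM, provided $K$ is chosen small enough that $C'(\lambda_p,\Lambda,p,n)\|m\|^{1/2}_{\mathcal C}<1/4$, exactly as at the end of the proof of Lemma \ref{S3:L4}. Rearranging yields \eqref{eq5.15}.
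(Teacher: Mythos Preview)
Your proposal is correct and follows exactly the route the paper indicates, namely to rerun the arguments of Corollaries~5.2 and~5.5 of \cite{DPcplx} while tracking the extra $\|m\|^{1/2}_{\mathcal C}$ contributions already present in \eqref{S3:L4:E00}. One small correction in your sketch of the local estimate: with your choice of cutoff, the weight $\delta_g\zeta$ vanishes on the entire boundary of $\Omega_g\cap T(2\Delta)$, so the term $I$ is actually zero; the boundary contribution $\int_{2\Delta}|u(g(x'),x')|^p\,dx'$ arises instead from the $III_{\{j=0\}}$ term after integrating $\partial_0(|u|^p)$, exactly as in \eqref{u6tg}. Also, if you work directly with the weight $\delta_g(x)=x_0-g(x')$ rather than first pulling back to $\BBR^n_+$, note that $\partial_i\delta_g=-\partial_i g$ for $i>0$ generates additional terms in $III$; these are harmless since they carry the small factor $\|\nabla g\|_{L^\infty}$, but the cleanest way to avoid them is to apply the pullback of Section~\ref{SS:PT} first (as is done in the proof of Lemma~\ref{S3:L8}) and then run your argument in the flat setting.
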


\begin{corollary}\label{S4:C1} Under the assumption of Lemma \ref{S3:L4}, for any $q \geq p >1$ and $a>0$ there exists a finite 
constant $C=C(\lambda_p,\Lambda,p,q,a,\|\mu\|_{\mathcal C},n)>0$ such that 
\begin{equation}\label{S3:C7:E00oo=s}
\|S_{p,a}(u)\|_{L^{q}({\BBR}^{n-1})}\le C\|\tilde{N}_{p,a}(u)\|_{L^{q}({\BBR}^{n-1})}.
\end{equation}
The statement also holds for any $q > 0$, 
provided we know a priori that
\newline
$\|S_{p,a}(u)\|_{L^{q}({\BBR}^{n-1})} < \infty.$

\end{corollary}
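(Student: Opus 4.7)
My plan is to prove the estimate first for $q=p$ using Corollary \ref{S4:C2} directly, then upgrade to all other exponents via a good-$\lambda$ inequality of Dahlberg--Kenig--Pipher type that leverages the local bound \eqref{eq5.15}.

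For the base case $q=p$, Fubini's identity \eqref{SSS-2} identifies $\|S_{p,a}(u)\|_{L^p({\BBR}^{n-1})}^p$ with $\iint_{{\BBR}^n_+}|\nabla u|^2 |u|^{p-2} x_0 \, dx$ up to a constant depending only on $a$ and $n$. The global bound \eqref{S3:L4:E00bb} of Corollary \ref{S4:C2} controls this by $\int_{{\BBR}^{n-1}} |u(0,x')|^p \, dx' + C(\|\mu'\|_{\mathcal C} + \|m\|_{\mathcal C}^{1/2})\int_{{\BBR}^{n-1}}[\tilde{N}_{p,a}(u)]^p \, dx'$. Since $|u(0,x')| \le \tilde{N}_{p,a}(u)(x')$ a.e.\ (the datum is the nontangential limit of the averaged solution; see the remark after Definition \ref{D:Dirichlet}), both terms on the right are dominated by $C\|\tilde{N}_{p,a}(u)\|_{L^p}^p$.

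For $q\neq p$ the plan is to establish a good-$\lambda$ inequality
$$
\sigma\bigl(\{S_{p,a}(u) > 2\lambda,\ \tilde{N}_{p,a}(u) \le \gamma\lambda\}\bigr) \,\le\, C\gamma^p\, \sigma\bigl(\{S_{p,a}(u) > \lambda\}\bigr), \qquad \lambda>0,
$$
for sufficiently small $\gamma>0$, then conclude by Cavalieri's principle and an absorption step. For $q>p$ the absorption is made rigorous by first running the argument with $S_{p,a}$ truncated away from the boundary (so that its $L^q$ norm is automatically finite), and passing to the limit using the $L^p$ base case for domination; for $0<q<p$ the same machinery applies, but the subtraction is legal only under the a priori hypothesis $\|S_{p,a}(u)\|_{L^q}<\infty$ stated in the corollary. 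To prove the good-$\lambda$ inequality, I would Whitney decompose $\{S_{p,a}(u)>\lambda\}$ into boundary cubes $\{\Delta_k\}$; above each $\Delta_k$ that meets $\{\tilde{N}_{p,a}(u)\le\gamma\lambda\}$, build a Lipschitz sawtooth graph $g_k$ of small Lipschitz norm with $\sup_{\Delta_k} g_k \lesssim \mbox{diam}(\Delta_k)$, chosen so that the truncated averaged maximal function $\tilde{N}^{2\,\mbox{diam}(\Delta_k)}_{p,a,g_k}(u)$ is bounded by $C\gamma\lambda$ on $2\Delta_k$. Applying \eqref{eq5.15} on $\Omega_{g_k}\cap T(\Delta_k)$ bounds $\iint|\nabla u|^2|u|^{p-2}\delta_{g_k}\,dx$ by $C(\gamma\lambda)^p |\Delta_k|$, and re-expressing this via Fubini as a bound on the square-function contribution from cones rooted inside $\Omega_{g_k}$ and summing in $k$ delivers the inequality.

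The main obstacle is the sawtooth construction underpinning the good-$\lambda$ step: producing graphs $g_k$ with Lipschitz constant uniformly small enough that Corollary \ref{S4:C2} applies with the same structural constants, while still capturing the cones at the low-$\tilde{N}_{p,a}$ points so that the $\gamma^p$ factor is recovered. This is a standard DKP-type geometric argument, and the authors' remark that the proof here differs from Corollary 5.5 of \cite{DPcplx} only in the appearance of $\|m\|_{\mathcal C}^{1/2}$ in various constants confirms that once the local bound \eqref{eq5.15} is available, the construction can be imported verbatim.
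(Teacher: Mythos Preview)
Your proposal is correct and follows essentially the same route as the paper. The paper does not give an independent proof here but refers to Corollary~5.5 of \cite{DPcplx}, noting only that the constants now pick up a dependence on $\|m\|_{\mathcal C}^{1/2}$; that argument is precisely the one you outline---the case $q=p$ from the global estimate \eqref{S3:L4:E00bb} via Fubini, and the extension to general $q$ by a good-$\lambda$ inequality built on the local sawtooth estimate \eqref{eq5.15}, with truncation to justify the absorption when $q>p$ and the stated a priori finiteness when $q<p$.
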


\section{Bounds for the nontangential maximal function by the $p$-adapted square function}
\label{SS:43}

The following proposition is the analog of Corollary 6.2 of 
\cite{DPcplx}. That Corollary was, in turn, a consequence of an estimate on the  $L^q$ norm of the nontangential maximal functions, $\tilde{N_2}$, by  
the $L^q$ norm of the square function, $S_2$, proved in \cite{DHM} for non-symmetric systems whose coefficients satisfied Carleson measure conditions.
Our perturbed operator $\mathcal L_1$ doesn't satisfy these conditions, and we will not have recourse to this fact, namely Proposition 5.8 of \cite{DHM}.
Therefore, we have to prove that Proposition 5.8 of \cite{DHM} holds under perturbations of the type we are considering.

\begin{proposition}\label{S3:C7} 
Let $1<p<\infty$, $\Omega=\BBR^n_+$ and $\mathcal{L}_0$ and $\mathcal{L}_1$ be as in Theorem \ref{MainPerturb}.
Assume that
$u$ is an energy solution to \eqref{E:D-energy}  in the strip for the operator $\mathcal L_1$, 
with the Dirichlet boundary datum $f\in \dot{B}^{2,2}_{1/2}(\partial\Omega;{\BBC}) \cap  L^{p}(\partial \Omega;{\BBC})$
and extended to be zero above 
height $h$.
Assume that  the measures $\mu$ defined by 
\eqref{Car_hatAA} and the measure $m$ defined by \eqref{eqdm1} are Carleson with finite norms.
There exists $K=K(\lambda_p,\Lambda,n,p)>0$ such that if 
$\|m\|_{\mathcal C}<K$
then for any $q>0$ and $a>0$ there exists a positive
constant $C=C(\lambda_p,\Lambda,p,q,a,n,\|\mu\|_{\mathcal C})$ such that 
\begin{equation}\label{S3:C7:E00oo}
\|\tilde{N}_{p,a}(u)\|_{L^{q}({\BBR}^{n-1})}\le C\|S_{p,a}(u)\|_{L^{q}({\BBR}^{n-1})},
\end{equation}
provided that a priori $\|\tilde{N}_{p,a}(u)\|_{L^{q}({\BBR}^{n-1})} < \infty$. If the dual exponent $p'>q$ we also have to assume that $\|S_{p',a}(u)\|_{L^{q}({\BBR}^{n-1})} < \infty$.

\end{proposition}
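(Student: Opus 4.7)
The plan is to adapt the proof of Proposition 5.8 of \cite{DHM} (whose consequence Corollary 6.2 of \cite{DPcplx} supplies the unperturbed version of this bound) by treating $u$ as an inhomogeneous solution of $\mathcal{L}_0$. As in the proof of Lemma \ref{S3:L4}, I would rewrite $\mathcal{L}_1 u = 0$ in the form
$$\mathcal{L}_0 u = \operatorname{div} \mathbf{F} + \beta\cdot\nabla u,\qquad \mathbf{F}_i=\varepsilon_{ij}\partial_j u,$$
with $\varepsilon_{ij} = A^0_{ij} - A^1_{ij}$ and $\beta = B^0 - B^1$. The operator $\mathcal{L}_0$ still satisfies the Carleson hypotheses of Theorem \ref{S3:T1}, and the hypothesis \eqref{eqdm} on $m$ forces $\|\varepsilon\|_{L^\infty}\lesssim \|m\|_{\mathcal{C}}^{1/2}$, so the source is both pointwise and Carleson-small.

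First I would replay, line by line, the integration-by-parts scheme of Proposition 5.8 of \cite{DHM}. That argument tests the equation $\mathcal{L}_0 u = 0$ against an appropriate function $\Phi$ (built from powers of $u$ with cutoffs in the base and in $x_0$), exploits $\mathcal{L}_0 u = 0$ to migrate derivatives onto $\Phi$, and arrives at an inequality controlling $\tilde{N}_{p,a}(u)$-quantities by $S_{p,a}(u)$-quantities. In our setting each use of $\mathcal{L}_0 u = 0$ now produces the additional terms
$$\mathcal{E}_1 = \iint_{\mathbb{R}^n_+} \varepsilon_{ij}(\partial_j u)(\partial_i \Phi)\,dx,\qquad \mathcal{E}_2 = \iint_{\mathbb{R}^n_+}\beta_i(\partial_i u)\Phi\,dx,$$
while the homogeneous conclusion of \cite{DHM} applies verbatim to the remaining terms.

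Second, I would bound $\mathcal{E}_1$ and $\mathcal{E}_2$ by the same Cauchy--Schwarz trick used in Lemma \ref{S3:L4}: after inserting a factor of $x_0^{\pm 1/2}$, one factor contains $|\varepsilon|^2/x_0$ (respectively $|\beta|^2 x_0$), to which Theorem \ref{T:Car} applies and produces a $\|m\|_{\mathcal C}^{1/2}\,\|\tilde{N}_{p,a}(u)\|_{L^q}^{p/2}$-bound; the other factor consists of the gradient quantity $\iint x_0 |\nabla u|^2 |u|^{p-2}\,dx$, i.e.\ an $S_{p,a}(u)$-type quantity via \eqref{SSS-2}. The total contribution is therefore dominated by
$$C\,\|m\|_{\mathcal{C}}^{1/2}\bigl(\|\tilde{N}_{p,a}(u)\|_{L^q}^{p}+\|S_{p,a}(u)\|_{L^q}^{p}\bigr).$$
Since $\|\tilde{N}_{p,a}(u)\|_{L^q}<\infty$ is assumed a priori, choosing $K$ small enough allows the $\tilde{N}$-piece to be absorbed on the left, yielding \eqref{S3:C7:E00oo}.

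The main obstacle I anticipate is two-fold. First, faithfully reconstructing the DHM test function $\Phi$ so that the error terms $\mathcal{E}_1,\mathcal{E}_2$ take the clean form above with all weights matching; in particular, the $x_0$-splitting used for Cauchy--Schwarz must be compatible with the $x_0$-cutoffs already present in the DHM identity, and one must verify that no uncontrolled boundary remainder at $x_0 = 0$ is generated. Second, the range $p' > q$ singled out in the statement, where the DHM proof passes through a duality pairing at exponent $p'$; the analogous absorption step on the dual side is what forces the extra a priori hypothesis $\|S_{p',a}(u)\|_{L^q} < \infty$, and tracking precisely which perturbation terms require that finiteness is the most delicate bookkeeping.
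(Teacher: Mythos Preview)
Your proposal has a structural gap. The proof of Proposition~5.8 in \cite{DHM} is not a single global integration by parts against a test function $\Phi$; it is a good-$\lambda$ argument comparing level sets of $\tilde N$ and $S$, and the equation enters only through a \emph{local} estimate on truncated sawtooth regions with Lipschitz graph boundaries (Lemma~5.4 of \cite{DHM}). The paper therefore proceeds by first proving the perturbed analogue of that local lemma---this is Lemma~\ref{S3:L8}---and then invoking the stopping-time machinery of \cite{DHM} verbatim. The local lemma is proved for $p=2$ only: one pulls the sawtooth region $\mathcal O=\{x_0>\theta\phi(x')\}$ back to $\mathbb{R}^n_+$ via the map $\rho$ of Section~\ref{SS:PT}, rewrites $\mathcal L_1 u=0$ as an inhomogeneous $\mathcal L_0$ equation exactly as you propose, and bounds the three new terms $II_{41},II_{42},II_{43}$ using $\|\varepsilon\|_{L^\infty}\lesssim\|m\|_{\mathcal C}^{1/2}$ together with the Carleson condition on $m$. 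The passage from the resulting $L^q$ bound of $\tilde N_{2}$ by $S_{2}$ to the $p$-adapted inequality \eqref{S3:C7:E00oo} is then done as in Section~6 of \cite{DPcplx}.

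Your direct global scheme cannot yield the general $L^q$ statement. The identity \eqref{SSS-2} you invoke to interpret $\iint x_0|\nabla u|^2|u|^{p-2}\,dx$ as a square-function norm gives $\|S_{p,a}(u)\|_{L^p}^p$, not $\|S_{p,a}(u)\|_{L^q}^p$; likewise the Carleson estimate from Theorem~\ref{T:Car} produces $\|\tilde N_{p,a}(u)\|_{L^p}^p$. Thus your error bound and absorption step match the left-hand side only when $q=p$. For general $q$ one needs the good-$\lambda$ mechanism, which in turn forces the analysis to be local on sawtooth domains---hence the pullback and the reduction to $p=2$ that you are missing.
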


The only missing ingredient is the following analogue of Lemma 5.4 of \cite{DHM}.

\begin{lemma}\label{S3:L8} Let $\Omega=\BBR^n_+$ and $\mathcal{L}_0$ and $\mathcal{L}_1$ be as in Theorem \ref{MainPerturb}.
Assume $u$ is as in Proposition \ref{S3:C7}.  Then there exists $a>0$ with the following significance.  For any $\theta\in[1/6,6]$ if $\phi:{\mathbb R}^{n-1}\to\mathbb R$ is a Lipschitz function with Lipschitz constant $1/a$ we consider the domain 
$\mathcal{O}=\{(x_0,x')\in\Omega:\,x_0>\theta \phi(x')\}$ with boundary  
$\partial\mathcal{O}=\{(x_0,x')\in\Omega:\,x_0=\theta \phi(x')\}$. In this context, 
for any surface ball $\Delta_r=B_r(Q)\cap\partial\Omega$, with $Q\in\partial\Omega$ and $r>0$ 
chosen such that $\phi \leq 2r$ pointwise on $\Delta_{2r}$, 
one has
\begin{align}\label{TTBBMM}
\int_{\Delta_r}\big|u\big(\theta \phi(\cdot),\cdot\big)\big|^2\,dx' 
&\leq C(1+\|\mu\|^{1/2}_{\mathcal C}+\|m\|^{1/2}_{\mathcal C})\Big[\|S_{2,b}(u)\|_{L^2(\Delta_{2r})}
\|\tilde{N}_{2,a}(u)\|_{L^2(\Delta_{2r})}
\nonumber\\
&\quad+\|S_{2,b}(u)\|^2_{L^2(\Delta_{2r})}\Big]+\frac{c}{r}\iint_{\mathcal{K}}|u|^{2}\,dX.
\end{align}
Here $C=C(\Lambda,n)\in(0,\infty)$ and $\mathcal{K}$ is a region inside $\mathcal{O}$ with diameter, 
distance to the boundary $\partial\mathcal{O}$, and distance to $Q$, all comparable to $r$. 
Also, the parameter $b>a$ is as in Lemma  5.2 of \cite{DHM}, and the cones used to define the square and nontangential 
maximal functions in this lemma have vertices on $\partial\Omega$.

Moreover, the term $\displaystyle\iint_{\mathcal{K}}|u|^2\,dX$ appearing 
in \eqref{TTBBMM} may be replaced by the quantity
\begin{equation}\label{Eqqq-25}
Cr^{n-1}|\tilde{u}(A_r)|^2+C\int_{\Delta_{2r}}S^2_{2,b}(u)\,d\sigma,
\end{equation}
where $A_r$ is any point inside $\mathcal{K}$ (usually referred to as a corkscrew point of $\Delta_r$) and
\begin{equation}\label{Eqqq-26}
\tilde{u}(X):=\dint_{B_{\delta(X)/2}(X)} u(Z)\,dZ.
\end{equation}
\end{lemma}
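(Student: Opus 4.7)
The plan is to adapt the proof of Lemma 5.4 of \cite{DHM} to the perturbed setting. The starting point, as in the proof of Lemma \ref{S3:L4}, is to rewrite the equation satisfied by $u$ as
\begin{equation*}
\mathcal L_0 u = \mathrm{div}\,\mathbf F + \beta\cdot\nabla u, \qquad \mathbf F_i = \varepsilon_{ij}\partial_j u,
\end{equation*}
with $\varepsilon = A^0 - A^1$ and $\beta = B^0 - B^1$. In this form one still has access to the structural conditions available for $\mathcal L_0$ (smoothness of $A^0$ and the Carleson condition controlled by $\|\mu\|_{\mathcal C}$), while $\mathrm{div}\,\mathbf F$ and $\beta\cdot\nabla u$ are treated as new inhomogeneous right-hand sides whose effect must be tracked through the rest of the argument.

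I would then run the Rellich-type argument of \cite{DHM}, Lemma 5.4. Choose a cutoff vector field $\alpha$ supported near the piece of $\partial\mathcal O$ lying above $\Delta_r$, with $\alpha\cdot\nu$ bounded below on that piece and of scale $r$ inside $\mathcal O$. The divergence theorem applied to $\mathrm{div}(|u|^2\alpha)$ produces $\int_{\Delta_r}|u(\theta\phi,\cdot)|^2\,dx'$ on one side and the interior integrals of $2\mathscr{R}e(\bar u\,\alpha\cdot\nabla u)$ and of $|u|^2\,\mathrm{div}\,\alpha$ on the other. The PDE for $\mathcal L_0$ is used to re-express the first interior integral; its $\mathcal L_0$-side contribution produces the familiar $\|\mu\|^{1/2}_{\mathcal C}$-weighted $\|S\|\|\tilde N\|$ and $\|S\|^2$ terms exactly as in the unperturbed case, while the cutoff-derivative pieces collect into the interior integral $\tfrac{c}{r}\iint_\mathcal K|u|^2\,dX$ appearing on the right-hand side of \eqref{TTBBMM}. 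An additional Caccioppoli/Poincar\'e step inside $\mathcal K$, as in \cite{DHM} and \cite{DPcplx}, yields the alternative form $Cr^{n-1}|\tilde u(A_r)|^2 + C\int_{\Delta_{2r}}S^2_{2,b}(u)\,d\sigma$.

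The genuinely new contributions come from the perturbation. After an additional integration by parts on the $\mathrm{div}\,\mathbf F$ term, one is left with interior integrals of the form
\begin{equation*}
\iint_{\mathcal O}\varepsilon_{ij}(\partial_j u)\,\partial_i(\bar u\,\eta\,\alpha)\,dX \qquad\text{and}\qquad \iint_{\mathcal O}\beta_j(\partial_j u)\,\bar u\,\eta\,\alpha\,dX,
\end{equation*}
together with harmless boundary pieces at the "top" of the cutoff region. Expanding the derivative and inserting $x_0^{\pm 1/2}$ for Cauchy--Schwarz, each such integral splits as a product of $\bigl(\iint|\nabla u|^2 x_0\,dX\bigr)^{1/2}$ (a square-function factor, producing $\|S_{2,b}(u)\|_{L^2(\Delta_{2r})}$) and $\bigl(\iint(|\varepsilon|^2 x_0^{-1}+|\beta|^2 x_0)|u|^2\,dX\bigr)^{1/2}$. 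By the Carleson hypothesis \eqref{eqdm} together with Theorem \ref{T:Car}, the latter is bounded by $\|m\|^{1/2}_{\mathcal C}\,\|\tilde N_{2,a}(u)\|_{L^2(\Delta_{2r})}$, yielding exactly the $\|m\|^{1/2}_{\mathcal C}$-weighted $\|S_{2,b}\|\|\tilde N_{2,a}\|$ and $\|S_{2,b}\|^2$ contributions appearing in \eqref{TTBBMM}.

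The main obstacle is the bookkeeping: replacing a homogeneous equation by an inhomogeneous one creates several additional interior and boundary terms, and one must verify that each is either absorbable into the left-hand side, of the required form $(\|\mu\|^{1/2}_{\mathcal C}+\|m\|^{1/2}_{\mathcal C})(\|S\|\|\tilde N\|+\|S\|^2)$, or the harmless scale-$r$ interior piece. Ensuring compatibility between the $x_0$-weighted Carleson condition on $\nabla A^0, B^0$ and the $x_0^{-1}$-weighted Carleson condition on $\varepsilon$ in \eqref{eqdm} is the delicate point, but the same $x_0^{1/2}\cdot x_0^{-1/2}$ split that worked in the proof of Lemma \ref{S3:L4} succeeds here as well.
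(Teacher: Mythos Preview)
Your proposal captures the essential perturbation idea correctly --- rewriting $\mathcal L_1 u=0$ as $\mathcal L_0 u=\mathrm{div}\,\mathbf F+\beta\cdot\nabla u$, and controlling the new inhomogeneous terms via Cauchy--Schwarz together with the Carleson condition \eqref{eqdm} --- but the route you sketch diverges from the paper's at a structural level.

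The paper does \emph{not} run a Rellich identity directly on the Lipschitz domain $\mathcal O$. Instead it first applies the pullback $\rho:\mathbb R^n_+\to\mathcal O$ of \S\ref{SS:PT}, passes to $v:=u\circ\rho$, and then works entirely in the flat half-space. There the argument is a concrete coordinate computation: write $\int|v(0,x')|^2\zeta=-\iint\partial_0[|v|^2\zeta]$, reinsert $1=\partial_0 x_0$ and integrate by parts again to produce the weighted term $\iint\bar v\,\partial^2_{00}v\,x_0\zeta$, and only then invoke the equation (using $\bar A^0_{00}=1$) to replace $\partial^2_{00}v$ by the remaining pieces of the operator plus the perturbation terms $\partial_i(\bar\varepsilon_{ij}\partial_j v)$. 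Each resulting term is then estimated individually, with the $\bar\varepsilon$-terms handled exactly as you describe. The pullback buys two things: the weight $x_0$ appears automatically from the $\partial_0 x_0$ trick (so the square-function structure is built in rather than inserted by hand), and the canonical form $\bar A^0_{00}=1$ is preserved, which is what makes isolating $\partial_{00}^2 v$ clean.

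Your direct approach via a transversal vector field $\alpha$ on $\mathcal O$ can be made to work, but as written it is underspecified: you need $\alpha$ to carry a factor comparable to $\delta_{\mathcal O}$ in order for the interior integrals to acquire the correct square-function weight, and you need to explain how $\alpha\cdot\nabla u$ leads to a second-order expression where the PDE can be substituted. You also omit the case $r\gg h$, which the paper handles separately by observing that $u\equiv 0$ above height $h$ and decomposing $\Delta_r$ into balls of scale $\sim h$. Finally, what you call ``harmless boundary pieces at the top'' are in the paper's version not boundary terms at all but the interior term $IV$ coming from $\partial_0\zeta$, since the cutoff vanishes near the top.
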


\begin{proof} 
Fix $\theta\in [1/6,6]$. We first consider the case when $r$ is small, i.e., $r\le 2h$. Consider the pullback transformation $\rho:\BBR^{n}_{+}\to\mathcal{O}$ 
defined as in section \ref{SS:PT} relative to the Lipschitz function $\phi$.
Let $v$ be given by $v:=u\circ\rho$ in $\BBR^{n}_{+}$. 
If $u$ satisfies \eqref{E:D-energy}, then the function 
$v:\BBR^{n}_+\to\BBR$ will satisfy a PDE similar to that of $u$. Specifically, we will have
\begin{equation}\label{ESv}
\partial_{i}\left(\bar{A}_{ij}(x)\partial_{j}v\right)
+\bar{B}_{i}(x)\partial_{i}v=0,
\end{equation}
where $\bar{A}$ is $p$-elliptic if the original $A$ was $p$-elliptic. Consider such pullback for solutions of both operators ${\mathcal L}_0$ and ${\mathcal L}_1$. Under the assumptions of Theorem \ref{S3:T1} for ${\mathcal L}_0$ we have that the coefficients $\bar{A}^0$ and $\bar{B}^0$ are such that
\begin{equation}\label{CarbarA}
d\overline{\mu}(x)=\left[\left(\sup_{B_{\delta(x)/2}(x)}|\nabla\bar{A}^0|\right)^{2}
+\left(\sup_{B_{\delta(x)/2}(x)}|\bar{B}^0|\right)^{2}\right]\delta(x) \,dx
\end{equation}
is a Carleson measure in ${\mathbb{R}}^n_{+}$. In virtue of the Carleson condition on
the difference of the coefficients of ${\mathcal L}_0$ and ${\mathcal L}_1$,  the coefficients  $\bar{A}^1$ and $\bar{B}^1$ of  the pullback of ${\mathcal L}_1$
satisfy
\begin{equation}\label{eqdm2}
 d\bar m(x)=\sup_{B_{\delta(x)/2}(x)}\left[|\bar A^0 - \bar A^1|^{2} \delta^{-1}(x)+ |\bar B^0 - \bar B^1|^{2} \delta(x)\right]\,dx
\end{equation}
is a Carleson measure in ${\mathbb{R}}^n_{+}$ with Carleson norm
$$\|\bar m\|_{\mathcal C}\le C(\|m\|_{\mathcal C},a), \qquad\mbox{and}\qquad C(\|m\|_{\mathcal C},a)\to \|m\|_{\mathcal C} \mbox{ as } a\to\infty.$$

Similarly, the Carleson norm $\|\overline\mu\|_{\mathcal{C}}$ also
only depends on the Carleson norm of the original coefficients and $a$  (the aperture of nontangential cones).
In particular, we can choose the parameter $a>0$ large enough such that $\|\bar m\|_{\mathcal C}$ and $\|\bar \mu\|_{\mathcal C}$ are at most 
twice that of $\|m\|_{\mathcal C}$ and $\|\mu\|_{\mathcal C}$, respectively.

We may also assume that the coefficient $\bar{A}_{00}=1$. This follows from a change of variables that modifies the lower order terms; see the
discussion following Definition 4.1 of \cite{DPcplx}.
\vskip1mm

Having fixed a scale $r>0$, we localize to a ball $B_r(y')$ in $\BBR^{n-1}$. 
Let $\zeta$ be a smooth cutoff function of the form $\zeta(x_0, x')=\zeta_{0}(x_0)\zeta_{1}(x')$ where
\begin{equation}\label{Eqqq-27}
\zeta_{0}= 
\begin{cases}
1 & \text{ in } [0, r], 
\\
0 & \text{ in } [2r, \infty),
\end{cases}
\qquad
\zeta_{1}= 
\begin{cases}
1 & \text{ in } B_{r}(y'), 
\\
0 & \text{ in } \mathbb{R}^{n}\setminus B_{2r}(y')
\end{cases}
\end{equation}
and
\begin{equation}\label{Eqqq-28}
r|\partial_{0}\zeta_{0}|+r|\nabla_{x'}\zeta_{1}|\leq c
\end{equation}
for some constant $c\in(0,\infty)$ independent of $r$. 

Let ${\mathcal L}_1u=0$. Our goal is to control the $L^p$ norm of $u\big(\theta h(\cdot),\cdot\big)$.  
Since after the pullback under the mapping $\rho$ the latter is comparable with the $L^p$ norm 
of $v(0,\cdot)$, we proceed to estimate
\begin{align}\label{u6tg}
&\hskip -0.20in
\int_{B_{2r}(y')}|v|^{2}(0,x')\zeta(0,x')\,dx' 
\nonumber\\[4pt]
&\hskip 0.70in
=-\iint_{[0,2r]\times B_{2r}(y')}\partial_{0}\left[|v|^{2}\zeta\right](x_0,x')\,dx_0\,dx' 
\nonumber\\[4pt]
&\hskip 0.70in
=-p\iint_{[0,2r]\times B_{2r}(y')} \mathscr{R}e\,\langle v,\partial_0 v\rangle\zeta\,dx_0\,dx'  
\nonumber\\[4pt]
&\hskip 0.70in
\quad-\iint_{[0,2r]\times B_{2r}(y')}|v|^{2}(x_0,x')\partial_{0}\zeta\,dx_0\,dx'
\nonumber\\[4pt]
&\hskip 0.70in
=:\mathcal{A}+IV.
\end{align}
We further expand the term $\mathcal A$ as a sum of four terms obtained 
via integration by parts with respect to $x_0$ as follows:
\begin{align}\label{utAA}
\mathcal A &=-2\iint_{[0,2r]\times B_{2r}(y')} \mathscr{R}e\,\langle v,\partial_0 v\rangle\zeta(\partial_0x_0)\,dx_0\,dx'
\nonumber\\[4pt]
&\quad=2\iint_{[0,2r]\times B_{2r}(y')} \left|\partial_{0}v\right|^{2}x_0\zeta\,dx_0\,dx' 
\nonumber\\[4pt]
&\quad +2\iint_{[0,2r]\times B_{2r}(y')} \mathscr{R}e\,\langle v,\partial^2_{00} v\rangle x_0\zeta\,dx_0\,dx' 
\nonumber\\[4pt]
&\quad +2\iint_{[0,2r]\times B_{2r}(y')} \mathscr{R}e\,\langle v,\partial_0 v\rangle x_0\partial_{0}\zeta\,dx_0\,dx' 
\nonumber\\[4pt]
&=:I+II+III.
\end{align}

We start by analyzing the term $II$. In view of the fact that $\bar{A}^0_{00}=1$, we can rewrite the equation for $v$
following \eqref{eq5.11}
\begin{equation}\label{S3:T8:E01}
\partial^2_{00}v
=-\sum_{(i,j)\neq(0,0)}\partial_{i}\left(\bar{A}^0_{ij}\partial_{j}v\right)
-(\bar B^0_i-\bar\beta^0_i)\partial_iv+\partial_{i}\left(\bar{\varepsilon}_{ij}\partial_{j}v\right),
\end{equation}
where $\bar\varepsilon_{ij}=\bar A^0_{ij}-\bar A^1_{ij}$ and $\bar\beta_j=\bar B^0_j-\bar B^1_j$.
In turn, this permits us to express
\begin{align}\label{TFWW}
II &=-2\,\mathscr{R}e\sum_{(i,j)\neq(0,0)}\iint_{[0,2r]\times B_{2r}}\left(\partial_{i}\bar{A}^0_{ij}\right) 
\overline{v}\partial_{j}vx_0\zeta\,dx_0\,dx' 
\nonumber\\[4pt]
&\quad -2\,\mathscr{R}e\iint_{[0,2r]\times B_{2r}}(\bar B^0_i-\bar\beta^0_i) \overline{v}\partial_{i}vx_0\zeta\,dx_0\,dx' 
\nonumber\\[4pt]
&\quad -2\,\mathscr{R}e\sum_{(i,j)\neq(0,0)}\iint_{[0,2r]\times B_{2r}}\bar{A}^0_{ij} \overline{v}\partial^2_{ij}vx_0 \zeta\,dx_0\,dx' 
\nonumber\\[4pt]
&\quad +2\,\mathscr{R}e\iint_{[0,2r]\times B_{2r}}\partial_{i}\left(\bar{\varepsilon}_{ij}\partial_{j}v\right) 
\overline{v}x_0\zeta\,dx_0\,dx'
\nonumber\\[4pt]
&=:II_{1}+II_{2}+II_{3}+II_4.
\end{align}
The third term above requires some further work. Let us temporarily fix $i,j$ and denote by 
$II_{3}^{ij}$ the corresponding term in $II_3$. Since in the present context we have
$(i,j)\neq(0,0)$, at least one of the two indices involved is not zero, say $i>0$. 
Integrating by parts with respect to the variable $x_i$ then yields (in what follows we do 
not sum over indices $i$ and $j$)
\begin{align}\label{6GBBB}
II_{3}^{ij} &=2\,\mathscr{R}e\iint_{[0,2r]\times B_{2r}}\left(\partial_{i}\bar{A}^0_{ij}\right) 
\overline{v}\partial_{j}vx_0\zeta\,dx_0\,dx' 
\nonumber\\[4pt]
&\quad +2\,\mathscr{R}e\iint_{[0,2r]\times B_{2r}}\bar{A}^0_{ij}\partial_{i}\left(\overline{v}\right)\partial_jv
x_0\zeta\,dx_0\,dx' 
\nonumber\\[4pt]
&\quad +2\,\mathscr{R}e\iint_{[0,2r]\times B_{2r}}\bar{A}^0_{ij} \overline{v}\partial_{j}v 
x_0\partial_{i}\zeta\,dx_0\,dx' 
\nonumber\\[4pt]
&=J^{ij}_1+J^{ij}_2+J^{ij}_3.
\end{align}
The treatment of $II_{3}^{ij}$ in the case when $i=0$ proceeds along the same lines, except that 
we now integrate in the variable $x_j$. Since the resulting terms are of a similar nature as above, 
we omit writing them explicitly. 

It remain to deal with the term $II_4$. We integrate by parts in the variable $i$. Due to the presence of the cutoff function there are no boundary terms. We obtain:

\begin{align}\label{III4}
II_4 &=-2\,\mathscr{R}e\iint_{[0,2r]\times B_{2r}}\bar{\varepsilon}_{ij}\partial_{j}v 
\partial_{i}(\overline{v})x_0\zeta\,dx_0\,dx'
\nonumber\\[4pt]
&\quad -2\,\mathscr{R}e\iint_{[0,2r]\times B_{2r}}\bar{\varepsilon}_{0j}\partial_{j}v 
\overline{v}\zeta\,dx_0\,dx'
\nonumber\\[4pt]
&\quad -2\,\mathscr{R}e\iint_{[0,2r]\times B_{2r}}\bar{\varepsilon}_{ij}\partial_{j}v 
\overline{v}x_0\partial_i\zeta\,dx_0\,dx' 
\nonumber\\[4pt]
&=II_{41}+II_{42}+II_{43}.
\nonumber\\[4pt]
\end{align}

We now group together terms that are of the same type. Firstly, we have 
\begin{equation}\label{Eqqq-29}
I+J_{2}\leq C(\Lambda,n)\|S_{2,b}(u)\|^2_{L^2(B_{2r})}.
\end{equation}
 Similarly, as $\|\varepsilon_{ij}\|_{L^\infty}\lesssim \|m\|_{\mathcal C}^{1/2}$ we have for $II_{41}$
\begin{equation}\label{Eqqq-29a}
II_{41}\leq C(n)\|m\|_{\mathcal C}^{1/2}\|S_{2,b}(u)\|^2_{L^p(B_{2r})}.
\end{equation}

Secondly, the Carleson condition \eqref{CarbarA} and the Cauchy-Schwarz inequality imply
\begin{equation}\label{Eqqq-30}\nonumber
II_{1} + II_{2}+II_{42}+J_1 \leq C(n)(1+\|\mu\|_{\mathcal C}^{1/2}+\|m\|_{\mathcal C}^{1/2})
\|S_{2,b}(u)\|_{L^2(B_{2r})}\|\tilde{N}_{2,a}(u)\|_{L^2(B_{2r})}.
\end{equation}
Next, corresponding to the case when the derivative falls on the cutoff function $\zeta$ we have
\begin{align}\label{TDWW}
J_{3}+II_{43}+III &\leq C(\Lambda,n)(1+\|m\|_{\mathcal C}^{1/2})\iint_{[0,2r]\times B_{2r}}\left|\nabla v\right||v| \frac{x_0}{r}\,dx_0\,dx' 
\nonumber\\[4pt]
&\leq C\left(\iint_{[0,2r]\times B_{2r}}|v|^{2}\frac{x_0}{r^{2}}\,dx_0\,dx'\right)^{1/2} 
\|S^{2r}_{2,b}(v)\|_{L^2(B_{2r})} 
\nonumber\\[4pt]
&\leq C\|S_{2,b}(u)\|_{L^2(B_{2r})}\|\tilde{N}_{2,a}(u)\|_{L^2(B_{2r})}.
\end{align}
Finally, the interior term $IV$, which arises from the fact that $\partial_{0}\zeta$ vanishes on the set
$(0,r)\cup(2r,\infty)$ may be estimated as follows:
\begin{equation}\label{Eqqq-31}
IV\leq\frac{c}{r}\iint_{[r,2r]\times B_{2r}}|v|^{2}\,dx_0\,dx'.
\end{equation}
Summing up all terms, the above analysis ultimately yields
\begin{align}\label{E1:uonh}
&\hskip -0.20in \int_{B_{r}(y')}|v(0,x')|^2\,dx' 
\nonumber\\[4pt]
&\hskip 0.40in 
\leq C(\Lambda,n)(1+\|\mu\|^{1/2}_{\mathcal C}+\|m\|^{1/2}_{\mathcal C}) 
\|S_{2,b}(u)\|_{L^2(B_{2r})}\|\tilde{N}_a(u)\|_{L^2(B_{2r})}
\nonumber\\[4pt]
&\hskip 0.40in 
\quad+C(\Lambda,p,n)(1+\|m\|^{1/2}_{\mathcal C})\|S_{2,b}(u)\|^2_{L^2(B_{2r})}
+\frac{c}{r}\iint_{[r,2r]\times B_{2r}}|v|^2\,dx_0\,dx'.
\end{align}
With this in hand, the estimate in \eqref{TTBBMM} follows (by passing from $v$ back to $u$ via the map $\rho$).

The case $r>>h$ requires some extra care. However we can observe that for $\theta\phi(x')\ge h$ we have $u(\theta\phi(x'),x')=0$ and hence for such points the lefthand side of \eqref{TTBBMM} vanishes. It follows that without loss of generality we may modify our function $\phi$ assume that $\theta\phi\le h$ in $\Delta_r$ without changing the value of the lefthand side of  \eqref{TTBBMM}. What this implies is that the estimate   \eqref{TTBBMM} for $\Delta_r$ can be deduced from adding up estimates  \eqref{TTBBMM} for smaller balls $\Delta_{r'}\subset\Delta_r$ where $r'\approx h$ and hence we still have $\phi\le 2r'$. However, the estimate for such small balls was established above and hence we can conclude that \eqref{TTBBMM} holds for balls of all sizes.

Finally, the fact that \eqref{Eqqq-25} can replace the integral over $\mathcal K$ in \eqref{TTBBMM} is a consequence of the Poincar\'e inequality.
\end{proof}

From inequality \eqref{TTBBMM} of Lemma \ref{S3:L8}, one can derive the global, and local, domination of the nontangential maximal function by the square
function, $S_2$,  in the $L^2$ norm exactly as in Proposition 5.8 of \cite{DHM}. The passage from $S_2$ to the $p$-adapted square function, $S_p$ is carried out in 
\cite{DPcplx}.

\section{Proofs of the main results.}

We briefly discuss the proof of Theorem \ref{MainPerturb}, since the calculations of (4.8-4.10) in \cite{DPcplx}, and the arguments that follow,
carry over verbatim. The only difference is the presence of another constant $\|m\|^{1/2}_{\mathcal C}$ which must be sufficiently small. The argument
is carried out for solutions in the infinite strip of height $h$, which implies finiteness of certain square functions and nontangential maximal functions.
We also do need to assume that the coefficients of the equations are smooth, again in order to have finiteness of the appropriate quantities (to apply
Proposition \ref{S3:C7}); an approximation and limiting argument removes this assumption in the end. The cases $p\geq 2$ and $p<2$ are argued separately, 
primarily because the finiteness assumptions are established differently.

\medskip

Next we establish Theorem \ref{Tmain}.
\begin{proof} 
With the perturbation result in hand, we may now introduce a mollification of the coefficients, as in  \cite{DPP}. Let ${\mathcal L}_1:={\mathcal L}$
be an operator whose coefficients satisfy assumptions of Theorem \ref{Tmain}. Denote these coefficients by $A^1$, $B^1$ so that we have than
\begin{equation}\label{oscT1a}
d\mu_1 =  \left( \delta(x)^{-1}\left(\mbox{osc}_{B_{\delta(x)/2}(x)}A^1\right)^2 + \sup_{B_{\delta(x)/2}(x)} |B^1|^2 \delta(x)\right) dx
\end{equation}
is a Carleson measure with norm $\|\mu_1\|_{\mathcal C}$ and $A^1$ is $p$-elliptic.

Consider a new operator ${\mathcal L}_0$ whose coefficients are
defined as follows. 
Set
$$A^0(x_0,x')= \iint_{{\mathbb R}^n_+} A^1(s,u)\phi_{t}(s-t,x'-u)dsdu,$$
where $\phi$ is a smooth real, nonnegative bump function on ${\mathbb R}^n$ supported in
the ball  $B_{1/2}(0)$ such that $\iint \phi=1$ and
$\phi_t(s,y) = t^{-n}\phi(s/t,y/t)$. We also set $B^0=B^1$.

The Carleson norms of
\begin{equation}\label{Car_hatAA-X}
d{\mu_0}(x)=\sup_{B_{\delta(x)/2}(x)}\left[|\nabla{A^0}|^{2} + |B^0|^{2} \right]\delta(x)\,dx
\end{equation}
and 
\begin{equation}\label{eqdm-X}
 dm(x)=\sup_{B_{\delta(x)/2}(x)}\left[|A^0 - A^1|^{2} \delta^{-1}(x)+ |B^0 - B^1|^{2} \delta(x)\right]\,dx
\end{equation}
satisfy, by the same arguments of  \cite[Corollary 2.3]{DPP}, the following bounds:
$$\|\mu_0\|_{\mathcal C}+\|m\|_{\mathcal C}\le C\|\mu_1\|_{\mathcal C},$$
for some $C=C(n,\phi)\ge 1$.

In the complex coefficient setting, there are a couple of new points to check. First, $p$-ellipticity is preserved by the mollification, hence if ${\mathcal L}_1$ is $p$-elliptic then so is ${\mathcal L}_0$. Second, the conditions $A^0_{00}=1$ and $\mathscr{I}m\,A^0_{0j}=0$ for all $1\leq j \leq n-1$ are preserved as well, provided the operator ${\mathcal L}_1$ is in canonical form. 

It follows that Proposition \ref{S3:C7} applies to ${\mathcal L}_0$ and ${\mathcal L}_1$ and we have for any energy solution $\mathcal L_1u=0$ the inequality

\begin{equation}\label{dferg}
\int_{\BBR^{n-1}}\left[\tilde{N}_{p,a}(u)\right]^{p}\,dx'\le C\int_{\BBR^{n-1}}\left[{S}_{p,a}(u)\right]^{p}\,dx'.
\end{equation}
Observe that in Lemma \ref{S3:L4} we only require that $\mu'$ is a Carleson measure. Hence to apply this lemma we do not have to mollify all coefficients of ${\mathcal L}_1$, only those in the row $A_{0j}^1$.
Define a second operator $\bar{\mathcal L}_0$ whose coefficients are:
\begin{align}\nonumber
\bar{A}^0_{ij}(x_0,x')&=\begin{cases}{A}^1_{ij}(x_0,x'),&\quad\mbox{for }i>0,\\
 \iint_{{\mathbb R}^n_+} A^1_{ij}(s,u)\phi_{t}(s-t,x'-u)dsdu,&\quad\mbox{for }i=0,\end{cases}\\
 \bar{B}^0_{i}(x_0,x')&={B}^1_{i}(x_0,x').
\end{align}
Clearly, if
\begin{equation}\label{eqdm-X1}
 d\bar{m}(x)=\sup_{B_{\delta(x)/2}(x)}\left[|\bar{A}^0 - A^1|^{2} \delta^{-1}(x)+ |\bar{B}^0 - B^1|^{2} \delta(x)\right]\,dx
\end{equation}
and
\begin{equation}\label{eqdm-X2}
 d\bar{\mu}'(x)=\sup_{B_{\delta(x)/2}(x)}\left[\textstyle\sum_j\left|\partial_0 \bar{A}^0_{0j}\right|^{2}+\left|\textstyle\sum_j\partial_j \bar{A}^0_{0j}\right|^{2} + |\bar{B}^0|^{2} \right]\delta(x)\,dx.
\end{equation}
then $\|\bar{m}\|_{\mathcal C},|\bar{\mu}'\|_{\mathcal C}\lesssim \|\mu_1'\|_{\mathcal C}$, where
\begin{equation}\label{oscT1ax}
d\mu'_1 =  \left(\delta(x)^{-1}\sum_{j=0}^{n-1}\left(\mbox{osc}_{B_{\delta(x)/2}(x)}A_{0j}^1\right)^2+ \sup_{B_{\delta(x)/2}(x)} |B^1|^2 \delta(x)\right)  dx
\end{equation}
It follows that Lemma \ref{S3:L4} applies to $\bar{\mathcal L}_0$ and ${\mathcal L}_1$ and gives us
\begin{eqnarray}\nonumber
\int_{\BBR^{n-1}}\left[{S}_{p,a}(u)\right]^{p}\,dx'
&\leq& C_1\int_{\BBR^{n-1}}|u(0,x')|^{p}\,dx'\\
&+&C_2\|\mu_1'\|^{1/2}_{\mathcal{C}}\int_{\BBR^{n-1}}\left[\tilde{N}_{p,a}(u)\right]^{p}\,dx'.
\end{eqnarray}
This combined with \eqref{dferg} implies the estimate \eqref{Main-Est2} and hence solvability of the $L^p$ Dirichlet problem for the operator ${\mathcal L}={\mathcal L}_1$, provided $\|\mu'_1\|_{\mathcal C}$ is sufficiently small. 

In particular, when the operator ${\mathcal L}$ has the block form (i.e. $A_{0j}^1=A^1_{j0}=\delta_{0j}$ and $B^1_j=0$) then $\mu_1'=0$ and hence Corollary \ref{block} holds.
\end{proof}

\begin{bibdiv}
\begin{biblist}

\bib{AAAHK}{article}{
   author={Alfonseca, M},
   author={Auscher, P.},
   author={Axelsson, A},
   author={Hofmann, S.},
   author={Kim, S.},
   title={Analyticity of layer potentials and $L^2$ solvability of boundary value problems for divergence form elliptic equations with complex $L^\infty$ coefficients.},
   journal={Adv. Math},
   volume={226},
   date={2011},
   number={5},
   pages={4533--4606},
}

\bib{AAH}{article}{
   author={Auscher, P.},
   author={Axelsson, A},
   author={Hofmann, S.},
   title={Functional calculus of Dirac operators and complex perturbations of Neumann and Dirichlet problems},
   journal={J. Func. Anal},
   volume={255},
   date={2008},
   number={2},
   pages={374--448},
}
			
 \bib{AAM}{article}{
   author={Auscher, P.},
   author={Axelsson, A.},
   author={McIntosh, A.},
   title={Solvability of elliptic systems with square integrable boundary
   data},
   journal={Ark. Mat.},
   volume={48},
   date={2010},
   number={2},
   pages={253--287},
}

\bib{AHLMT}{article}{
   author={Auscher, P.},
   author={Hofmann, S.},
   author={Lacey, M.},
   author={Hofmann, S.},
   author={Tchamitchian, P.},
   title={The solution of the Kato square root problem for second order elliptic operators on ${\mathbb R}^n$},
   journal={Ann. Mat.},
   volume={156},
   date={2001},
   number={2},
   pages={633--654},
}

 \bib{CD}{article}{
   author={Carbonaro, A.},
   author={Dragi\v{c}evi\'c, O.},
   title={Convexity of power functions and bilinear embedding for divergence-form operators with complex coefficients},
   journal={arXiv:1611.00653},
}

 \bib{CM}{article}{
   author={Cialdea, A.},
   author={Maz'ya, V.},
   title={Criterion for the $L^p$-dissipativity of second order differential operators with complex coefficients},
   journal={ J. Math. Pures Appl.},
   volume={84},
   date={2005},
   number={9},
   pages={1067--1100},
}

   \bib{Da}{article}{
   author={Dahlberg, B.},
   title={On the absolute continuity of elliptic measures},
   journal={American Journal of Mathematics}
   volume={108},
   date={1986},
   pages={1119-1138},
    }

\bib{DHM}{article}{
   author={Dindo{\v{s}}, M.},
   author={S. Hwang},
   author={Mitrea, M.},
   title={The $L^p$ Dirichlet boundary problem for second order Elliptic Systems with rough coefficients},
   journal={arXiv:1708.02289},
}

 \bib{DW}{article}{
   author={Dindo\v{s}, M.},
   author={Wall, T.},
   title={The $L^p$ Dirichlet problem for second-order, non-divergence form operators: solvability and perturbation results},
   journal={J. Funct. Anal.},
   volume={261},
   date={2011},
   pages={1753-1774},
  }

   \bib{DPP}{article}{
   author={Dindo\v{s}, M.},
   author={Petermichl, S.},
   author={Pipher, J.},
   title={The $L^p$ Dirichlet problem for second order elliptic operators
   and a $p$-adapted square function},
   journal={J. Funct. Anal.},
   volume={249},
   date={2007},
   number={2},
   pages={372--392},
    }

  \bib{DPcplx}{article}{
   author={Dindo\v{s}, M.},
   author={Pipher, J.},
   title={Regularity theory for solutions to second order elliptic operators with complex coefficients and the $L^p$ Dirichlet problem},
   journal={arXiv:1612.01568},
    }
    
\bib{Esc}{article}{
author={Escauriaza, L.}
title={The Lp Dirichlet problem for small perturbations of the Laplacian},
journal={Israel J.Math.},
volume={94},
date={1996},,
   pages={ 353-366}
}

\bib{FKP}{article}{
author={Fefferman, R.}
author={Kenig, C.}
author={Pipher,J.}
title={The theory of weights and the Dirichlet problem for elliptic equations.} 
journal={Ann. of Math.}
   volume={134},
   date={1991},
   number={1},
   pages={ 65--124}
}

\bib{HKMPreg}{article}{
   author={Hofmann, S.},
   author={Kenig, C.},
   author={Mayboroda, S.},
   author={Pipher, J.},
   title={The regularity problem for second order elliptic operators with complex-valued bounded measurable coefficients},
   journal={Math. Ann.},
   volume={361},
   date={2015},
   issue={3--4},
   pages={863--907},
}

\bib{HM}{article}{
 author={Hofmann, S.},
   author={Martell, J.},
   title={$L^p$ bounds for Riesz transforms and square roots associated to second order elliptic operators},
   journal={Pub. Mat.},
   volume={47},
   date={2003},
   pages={497--515},
}

    \bib{KP2}{article}{
   author={Kenig, C.},
   author={Pipher, J.},
   title={The Neumann problem for elliptic equations with nonsmooth
   coefficients},
   journal={Invent. Math.},
   volume={113},
   date={1993},
   number={3},
   pages={447--509},
}

    \bib{MPT1}{article}{
   author={Milakis, E.},
   author={Pipher, J.},
   author={Toro, T.},
   title={Harmonic analysis on chord-arc domains},
   journal={J. Geom. Anal.},
   volume={23},
   date={2013},
}

    \bib{MPT2}{article}{
   author={Milakis, E.},
   author={Pipher, J.},
   author={Toro, T.},
   title={Perturbations of elliptic operators in chord-arc domains},
   journal={Contemporary Math.},
   volume={612},
   date={2014},
}

    \bib{Ri}{article}{
   author={Rios, C.},
   title={$L^p$ regularity of the Dirichlet problem for elliptic equations with singular drift}     
   journal={Publ. Mat.},
   volume={50},
   date={2006},
   pages={475-507}
}

    \bib{RN}{article}{
   author={Riviera-Noriega, J.},
   title={Perturbation and solvability of initial $L^p$ Dirichlet problems for parabolic equations over non-cylindrical domains},
   journal={Can. J. Math.},
   volume={66},
   date={2014},
   pages={429-452},
}

\end{biblist}
\end{bibdiv}

\end{document}